\def\Cal{\mathcal}
\def\<<{\langle } 
\def\>>{\rangle } 
\def\C{\mathcal C}
\numberwithin{equation}{section} 
\newtheorem{theorem}{Theorem}[section] 
\newtheorem{proposition}[theorem]{Proposition} 
\newtheorem{definition}[theorem]{Definition} 
\newtheorem{remark}[theorem]{Remark} 
\newtheorem{lemma}[theorem]{Lemma}
\def\<{\langle} 
\def\>{\rangle}
\begin{document} 
 
\title{
Degenerations of triple coverings and Thomae's formula 
} 
 
\author{Keiji Matsumoto and Tomohide Terasoma} 

\subjclass{Primary 14H42; Secondary 32G20} 
\keywords{theta constant, Thomae's formula, binary tree}

\dedicatory{Dedicated to Professor Toshiyuki Katsura on his 60th birthday}
\maketitle 

\makeatletter 
\renewcommand{\@evenhead}{\tiny \thepage \hfill 
\hfill}

\renewcommand{\@oddhead}{\tiny \hfill 
 \hfill \thepage} 
%\makeatother 
\begin{abstract}
In this paper, we prove Thomae's formula for 
triple coverings of the complex projective line 
$\bold P^1$ and give
the absolute constant in this formula for a specific choice 
of symplectic bases.
This formula gives a relation between theta constants,
the products of 
the determinant of a period matrix and difference products
of branch points. To specify symplectic bases of them,
we use the combinatorics of binary trees on $\bold P^1$.
These symplectic bases behave so well for
degenerations that we 
reduce the formula to a special case
treated in \cite{BR}, \cite{N}.

\end{abstract}
\setcounter{footnote}{1}
\par\noindent
%\textbf{MSC2000:}  Primary 14H42; Secondary 32G20.

\section{Introduction}

Let $E:y^2=x(1-x)(1-\lambda x)$ be an elliptic curve
and $\tau$ be the normalized period matrix of $E$.
%Theta constant of $\tau$ for a characteristic $(\alpha,\beta)$
%is denoted by $\vartheta (\tau)[\alpha, \beta]$.
Then we have Jacobi's formula for
an elliptic curve $E$:
$$
\vartheta(\tau)[
0, 0]^2=\frac{1}{\pi}
\int_{0}^1\frac{1}{\sqrt{x(1-x)(1-\lambda x)}}dx,
$$
where $\vartheta(\tau)[\alpha, \beta]$ is defined in 
(\ref{def of theta}).

In 1870, Thomae (\cite{T}) generalized this formula to
those for hyperelliptic curves of arbitrary genus.
He showed that the squares of theta constants
at the normalized period matrix of a hyperelliptic curve
are equal to the products of the determinant of
a period matrix and certain difference products of branch points
up to an easy constant multiple.
Bershadsky-Radul and Nakayashiki \cite{BR},\cite{N} 
independently proved an analogous formula 
for cyclic coverings of the projective
line $\mathbb P^1$ with special branching indices, which
is called Thomae's formula for cyclic coverings.
%As for triple coverings, \cite{A} 
%proved the formula for arbitrary indices.
They prove a power of a theta constant with charcteristic $\Lambda$ is the product of
the determinant of a period matrix, a certain
difference product and a constant $\kappa_{\Lambda}$,
and show that the constant $\kappa_{\Lambda}$ depends only on the genus,  
a choice of symplectic basis of the covering and the theta characteristic $\Lambda$.
This result is generalized to arbitrary branching indices and covering degrees by \cite{K}.

For hyperelliptic curves,
Fey computed the absolute constants $\kappa_{\Lambda}$ 
in Thomae's formula
using degeneration arguments in his book \cite{F}.
In this paper, we give a closed formula for the absolute constants $\kappa_{\Lambda}$
for triple coverings of arbitrary branching
indices.
To formulate the exact statement of Thomae's formula,
we construct symplectic bases of a family of triple coverings
using the combinatorics of binary trees on $\bold P^1$.
This family is extended to stable curves with trivial monodromy action
and each of its special fibers is the union of two triple coverings of $\bold P^1$'s.
By this degeneration, a binary tree decomposes to
two trees and according to this decomposition, the symplectic bases
are extended to the union of symplectic bases of
two irreducible components.
We use this property for the study of the absolute constant $\kappa_{\Lambda}$.

The contents of this paper are as follows.
In Section 2, we recall results of Bershadsky-Radul and Nakayashiki.
In Section 3, we define a specific choice of symplectic basis
$\{A_1, \dots, A_g, B_1,\cdots, B_g\}$ associated to
a planar binary tree. We study the combinatorial process of 
degenerations.
In Section 4, we study stable degenerations of algebraic curves
associated to the decomposition of binary trees.
In Section 5, 
we prove Thomae's formula (Theorem \ref{main theorem}) for triple coverings
of $\mathbb P^1$ and compute the absolute constants
for arbitrary branching indices using degeneration argument.
The proof is based on results of Bershadsky-Radul and
Nakayashiki and the formula 
(\ref{variant of chowla-selberg}), 
which is a variant of 
the Chowla-Selberg formula. 
Our method is different from that of \cite{K}.

{\bf Acknowledgment}
After finishing this work, we are pointed out about the references
\cite{A}, \cite{K} by Y. Kopeliovich.
The authors would like to express their thanks to him.

\section{Result of Bershadsky-Radul-Nakayashiki}
In this section, we recall results of Bershadsky-Radul and
Nakayashiki.
Let $n\geq 2$ be an integer,
$\mathbb P^1$ be the projective line with a coordinate $x$,
and $\Sigma=\{\lambda_1, \cdots, \lambda_{3n}\}$
be a set of distinct $3n$ points in $\mathbb P^1$ 
different from $x=\infty$.
The values of $x$ at $\lambda_1,\dots \lambda_{3n}$ are also denoted by the
same letter. Let $C$ be the cyclic
triple covering of $\mathbb P^1$ defined by
$$
y^3=(x-\lambda_1)\cdots (x-\lambda_{3n}).
$$
By Hurwitz's formula, the genus $g=g(C)$ of $C$ is 
equal to $3n-2$ and 
a basis of the space of holomorphic differential forms 
on $C$ is given by
$\{\omega_1,\dots, \omega_{3n-2}\}$, where
\begin{align*}
\omega_i&=\frac{x^{n-1-i}dx}{y} \qquad \text{ for }i=1, \dots, n-1, \\
\omega_{i+n-1}&=\frac{x^{2n-1-i}dx}{y^2} \qquad \text{ for }i=1, \dots, 2n-1.
\end{align*}

We fix a symplectic basis $\{A_i, B_i\}_{i=1, \dots, 3n-2}$ of 
$H_1(C,\mathbb Z)$. The period matrices $P_A$ and $P_B$
are defined as
$$
P_A=(\int_{A_i}\omega_j)_{i,j=1, \dots, 3n-2},\quad
P_B=(\int_{B_i}\omega_j)_{i,j=1, \dots, 3n-2}.
$$
The normalized period matrix $\tau$ is defined by 
$\tau=P_A{P_B}^{-1}$. 
For vectors $\alpha, \beta \in \mathbb Q^g$,
we define the theta function as follows: 
\begin{equation}
\label{def of theta}
\vartheta(\tau)
\left[\alpha , \beta\right](z)=
\sum_{m\in \mathbb Z^g}
\bold e(\frac{1}{2}(m+\alpha)\tau\ ^t(m+\alpha)
+(m+\alpha)\ ^t(z+\beta)).
\end{equation}
Here, we use the notation $\bold e(x)=\exp(2\pi \sqrt{-1} x)$.
The value of the theta function at $z=0$ is called the theta constant 
and denoted by 
$\displaystyle
\vartheta(\tau)
\left[ \alpha+a, \beta+b\right]$.
Note that if $\alpha,\beta\in\frac{1}{6}\mathbb Z^g$, then
the sixth power of the theta constant is 
periodic with respect to $\alpha$ and $\beta$,
that is
$$
\vartheta(\tau)
\left[ \alpha , \beta\right]^6=
\vartheta(\tau)
\left[ \alpha+a , \beta+b\right]^6
$$
for $a,b\in \mathbb Z^g$.
By the isomorphism 
\begin{equation}
\label{identification of homology}
\mathbb Q^g\oplus\mathbb Q^g\to H_1(C,\mathbb Q):
(a_1,\cdots, a_g,b_1, \dots, b_g)\mapsto 
\sum_ia_iA_i+\sum_ib_iB_i,
\end{equation}
the theta constant $\displaystyle\vartheta (\tau)
\left[ \alpha, \beta\right]^6$
is a function on the set of $6$-torsion points 
$H_1(C,\mathbb Q/\mathbb Z)_6$ of $H_1(C,\mathbb Q/\mathbb Z)$.
The value at $\lambda\in H_1(C,\mathbb Q/\mathbb Z)_6$
is denoted by $\vartheta(\tau)[\lambda]$.

Let $\rho:C\to C$ be the automorphism of the curve $C$
defined by $(x,y)\mapsto(x, \omega y)$,
where $\omega=\displaystyle \frac{-1+\sqrt{-3}}{2}$.
The induced homomorphism on $H_1(C,\mathbb Z)$ is also denoted by
$\rho$.
We give a description of the group of $(1-\rho)$
torsion part $H_1(C, \mathbb Q/\mathbb Z)_{(1-\rho)}$
of $H_1(C, \mathbb Q/\mathbb Z)$ as a $\mathbb F_3$-vector
space.

Let $b$ be a base point in $\mathbb P^1-\Sigma$. A
path connecting $b$ and small anti-clockwise circle 
around $\lambda_i$
defines a path $\overline{\gamma_i}$ in $\mathbb P^1-\Sigma$. We choose a
base point $\tilde b$ in $C$ over the base point $b$. Then the path
$\overline{\gamma_i}$ is lifted uniquely beginning from $\tilde b$,
and its end point is $\rho(\tilde b)$.
If any paths $\gamma_i-\{\tilde b,\rho(\tilde b)\}$ are disjoint 
to one other, then we may assume that
paths $\gamma_1, \dots, \gamma_{3n}$ 
are arranged in the anti-clockwise order
by renumbering the paths 
$\gamma_1, \dots, \gamma_{3n}$.
Then the cycle
$$
\gamma_1+\rho\gamma_2+\rho^2\gamma_3+\cdots +\rho^{3n-1}\gamma_{3n}
$$
is homologous to zero. 
For $i=1,\dots, 3n-1$, the path $\gamma_i-\gamma_{3n}$ defined an
element of $H_1(C,\mathbb Z)$. Therefore 
$\delta_i=\frac{1}{3}(1-\rho)(\gamma_i-\gamma_{3n})$ defines an element of
$H_1(C, \mathbb Q/\mathbb Z)_{(1-\rho)}$.
By the above relation, the cycle
$\delta_1+\rho\delta_2+\cdots \rho^{3n-2}\delta_{3n-1}$ is homologous to zero.
Let $\bold V$ be the $\mathbb F_3$-vector space $\oplus_{i=1}^{3n}\mathbb F_3e_i $ 
generated by $e_1,\dots, e_{3n}$. We define the map 
$\Pi$ and $\Delta$ by
\begin{align*}
&\Pi:\bold V  \ni \sum_i a_ie_i \mapsto \sum_i a_i \in\mathbb F_3, \\
&\operatorname{Diag}:
\mathbb F_3 \ni a \mapsto a\sum_{i=1}^{3n} e_i \in \bold V.
\end{align*}
Then we have an isomorphism
$$
H_1(C, \mathbb Q/\mathbb Z)_{(1-\rho)}\simeq Ker(\Pi)/Im(
\operatorname{Diag})
$$
by assigning $\delta_i$ to $e_i-e_{3n}$.

We are ready to state Thomae's formula for the curve $C$ 
(\cite{N},\cite{BR}). Let $\widetilde\Lambda=\sum_{i}a_ie_i$ be a representative
in $\bold V$ of an element $\Lambda$ of
$H_1(C,\mathbb Q/\mathbb Z)_{(1-\rho)}$. We define a subset
$\widetilde\Lambda_i$ of $\{1, \dots, 3n\}$ for $i=0,1,2$ by
$$
\widetilde\Lambda_i=\{p\in \{1, \dots, 3n\}
\mid a_p\equiv i (\text{ mod }3)\}.
$$
The difference product $(\widetilde\Lambda_i\widetilde\Lambda_j)$ 
is defined by
$$
(\widetilde\Lambda_i\widetilde\Lambda_j)=\begin{cases}
\prod_{a\in \widetilde\Lambda_i,b\in \widetilde\Lambda_j}
(\lambda_a-\lambda_b)\quad
& \text{ if }i\neq j, \\
\prod_{a,b\in \widetilde\Lambda_i,a<b}(\lambda_a-\lambda_b)\quad
& \text{ if }i=j.
 \end{cases}
$$
Then it is defined up to sign.
We define the difference product $\Delta(\Lambda)$ attached to
$\Lambda\in H_1(C,\mathbb Q/\mathbb Z)_{(1-\rho)}$ by
$$
\Delta(\Lambda)=\prod_{i=0}^2(\widetilde\Lambda_i\widetilde\Lambda_i)^3
\prod_{0\leq i<j \leq 2}(\widetilde\Lambda_i\widetilde\Lambda_j)
$$
for a representative $\widetilde\Lambda$ of $\Lambda$. 
It does not depend on 
the choice of the representative up to sign.
\begin{theorem}
Let $\Lambda$ be an element $H_1(C,\mathbb Q/\mathbb Z)_{(1-\rho)}$
and $\widetilde\Lambda$ be a representative of $\Lambda$ in $\bold V$.
Suppose that 
\begin{equation}
\label{equi-dist all white}
\# \widetilde\Lambda_0=\#\widetilde\Lambda_1
=\#\widetilde\Lambda_2=n.
\end{equation}
Then we have
$$
\vartheta(\tau)[\Lambda+\varrho]^6=\kappa_{\Lambda}\det(P_B)^3\cdot \Delta(\Lambda). 
$$
Here $\varrho$ is the Riemann constant and $\kappa_{\Lambda}$ is a constant 
independent of 
$\lambda_1,\dots, \lambda_{3n}$. Moreover $\kappa_{\Lambda}^6$ is independent
of $\Lambda$.
\end{theorem}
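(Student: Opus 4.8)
The plan is to realize the characteristic $\Lambda+\varrho$ as a divisor class on $C$ supported over $\Sigma\cup\{\infty\}$, and then to split the proof into the existence of a constant $\kappa_{\Lambda}$ independent of $\lambda_1,\dots,\lambda_{3n}$, followed by the $\Lambda$-independence of $\kappa_{\Lambda}^{6}$. Let $q_i\in C$ be the unique ramification point over $\lambda_i$, so that $\operatorname{div}(x-\lambda_i)=3q_i-\sum_{k=1}^{3}q_\infty^{(k)}$ and $\operatorname{div}(y)=\sum_i q_i-n\sum_k q_\infty^{(k)}$, where $q_\infty^{(1)},q_\infty^{(2)},q_\infty^{(3)}$ lie over $x=\infty$. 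Fix a base point $p_0$, and let $v=(v_1,\dots,v_g)$ be the holomorphic differentials normalized by $\int_{B_j}v_i=\delta_{ij}$, whose transition matrix to $(\omega_1,\dots,\omega_g)$ is built from $P_B^{-1}$. For the representative $\widetilde\Lambda=\sum_i a_ie_i$ one checks that $\Lambda+\varrho$ is represented by an effective divisor $\mathcal D_{\widetilde\Lambda}$ of degree $g$ supported over $\Sigma\cup\{\infty\}$, with multiplicities read off from $\widetilde\Lambda$, and that the hypothesis (\ref{equi-dist all white}) is exactly the condition under which $\mathcal D_{\widetilde\Lambda}$ is non-special, equivalently under which $\vartheta(\tau)[\Lambda+\varrho]$ is nonzero.

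For the first assertion, consider the function $p\mapsto\vartheta(\tau)[\Lambda+\varrho]\bigl(\int_{p_0}^{p}v\bigr)$ on $C$; by the Riemann vanishing theorem its zero divisor is $\mathcal D_{\widetilde\Lambda}$ up to a shift of $p_0$. Normalizing its sixth power by prime forms $E(\cdot,\cdot)$ into an honest meromorphic function on $C$, compare it with the explicit function $\prod_i(x-\lambda_i)^{b_i}$ for exponents $b_i$ chosen from $\widetilde\Lambda$ so that the two have the same divisor; by Liouville their ratio is a nonzero constant. Evaluating this identity at a convenient point and inserting the local expansions, at the points of $\mathcal D_{\widetilde\Lambda}$, of the $v_i$ and of the prime forms --- which are rational in the $\lambda$'s once a local parameter of the form $(x-\lambda_i)^{1/3}$ is fixed --- the fractional parameters cancel in the sixth power and one obtains
$$\vartheta(\tau)[\Lambda+\varrho]^{6}=\kappa_{\Lambda}\det(P_B)^{3}\,\Delta(\Lambda).$$
Here $\Delta(\Lambda)$ is recognized from the factors $(\lambda_a-\lambda_b)$ produced by the pairwise prime forms among the finite ramification points, their multiplicities yielding exactly the exponent $3$ on each $(\widetilde\Lambda_i\widetilde\Lambda_i)$ and $1$ on each $(\widetilde\Lambda_i\widetilde\Lambda_j)$, while the power of $\det P_B$, governed by the passage from the $v_i$ to the $\omega_j$ together with the prime-form normalization, works out to $\det(P_B)^{3}$. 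Every remaining factor of $\kappa_{\Lambda}$ is then a universal numerical constant or depends only on $g$ and the combinatorics of $\widetilde\Lambda$, so $\kappa_{\Lambda}$ does not depend on $\lambda_1,\dots,\lambda_{3n}$. (Equivalently, one may run this step through Fay's prime-form expansion of $\vartheta(\tau)[e]\bigl(\sum_k\int_{p_0}^{r_k}v\bigr)$, evaluating at points $r_k$ specializing to the ramification points.)

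For the $\Lambda$-independence of $\kappa_{\Lambda}^{6}$, use that $S_{3n}$ acts on $\{\lambda_1,\dots,\lambda_{3n}\}$ and on $\bold V$ by permuting the $e_i$, that (\ref{equi-dist all white}) is $S_{3n}$-invariant, and that $S_{3n}$ acts transitively on the characteristics satisfying it --- any assignment of the residues $0,1,2$ with each occurring $n$ times is carried to any other by a permutation. Transporting the $\lambda_i$ around a loop realizing a transposition $\sigma$ induces a symplectic transformation of $H_1(C,\mathbb Z)$, hence $\tau\mapsto(A\tau+B)(C\tau+D)^{-1}$; by the transformation formula for theta constants with rational characteristics, the sixth power of $\vartheta(\tau)[\Lambda+\varrho]$ picks up $\det(C\tau+D)^{3}$ times a sign --- the eighth-root-of-unity ambiguity being killed by the sixth power --- while $\Lambda$ goes to $\sigma\Lambda$ and $\det(P_B)^{3}\Delta(\Lambda)$ picks up the matching $\det(C\tau+D)^{-3}$ times a sign. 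Comparing the displayed identity before and after the loop, and using that $\kappa_{\Lambda}$ is already known to be branch-point independent, gives $\kappa_{\sigma\Lambda}^{6}=\kappa_{\Lambda}^{6}$ for each transposition, hence for all of $S_{3n}$; by transitivity $\kappa_{\Lambda}^{6}$ is a single constant depending only on $g$.

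The main obstacle is the first part: one must pin down exactly the dictionary between $\widetilde\Lambda$ and the effective divisor $\mathcal D_{\widetilde\Lambda}$ and verify its non-speciality --- this is precisely where (\ref{equi-dist all white}) enters, and the argument has no content without it --- and then control, at each point of $\mathcal D_{\widetilde\Lambda}$, the local parameter $(x-\lambda_i)^{1/3}$, the normalized differentials, the prime form and the Riemann constant accurately enough that after taking sixth powers exactly $\Delta(\Lambda)$ and exactly $\det(P_B)^{3}$ survive, with the precise exponents inside $\Delta(\Lambda)$ coming out right. In the second part the only delicate point is confirming that the automorphy factors $\det(C\tau+D)^{\pm3}$ cancel between the two sides and that no residual root of unity survives the sixth power, which is a routine but careful application of the Igusa-type transformation formula for theta constants.
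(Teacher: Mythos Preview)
This theorem is not proved in the paper: it is stated in Section~2 as the result of Bershadsky--Radul \cite{BR} and Nakayashiki \cite{N}, and the paper takes it as input for its own main result (Theorem~\ref{main theorem}), which is proved by degeneration. So there is no in-paper proof to compare against beyond the citation. Your outline for the first assertion follows Nakayashiki's strategy---realize $\Lambda+\varrho$ as a divisor on ramification points, invoke Riemann's vanishing theorem and Fay's prime-form identities, and match against an explicit product of $(x-\lambda_i)$'s---and is correct in spirit, though (as you yourself note) the bookkeeping that produces exactly the exponents in $\Delta(\Lambda)$ and exactly $\det(P_B)^3$ is the entire content of \cite{N} and is not supplied here.

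Your argument for the $\Lambda$-independence of $\kappa_\Lambda^6$, however, has a concrete gap. You assert that ``the eighth-root-of-unity ambiguity [is] killed by the sixth power,'' but $\gcd(6,8)=2$: the sixth power of a primitive eighth root of unity is a primitive fourth root of unity, not a sign. Moreover, for characteristics in $\tfrac{1}{6}\mathbb Z^g$ the Igusa transformation formula contributes an additional phase $\bold e(\phi(M,\alpha,\beta))$ beyond the $\kappa(M)$ factor, and the Riemann constant $\varrho$ itself shifts under the symplectic change of basis induced by the braid. None of these are tracked in your sketch, and they do not obviously cancel after one sixth power. Nakayashiki does not in fact argue via monodromy: he compares two equi-distributed characteristics differing by an elementary move and evaluates the ratio of the two sides directly through the prime-form identities, where all phases are explicit. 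Your braid-monodromy approach (closer to the classical hyperelliptic argument) can likely be pushed through, but it requires honest control of every root of unity, and the step as written is incorrect.
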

\begin{definition}
The absolute constant $\kappa_{\Lambda}^6$ is called Thomae's
constant for the symplectic basis $\{A_i,B_i\}_i$.
\end{definition}
\section{Binary trees and symplectic bases}
\subsection{Symplectic basis associated to a marked binary tree}
In this section, we define a symplectic basis associated 
to a binary tree and study its properties.
For general terminology on trees, refer to \cite{S} and \cite{D}.
\begin{definition}
\begin{enumerate}
\item
A vertex of a tree $\Gamma$ is called an inner vertex if
it is adjacent to more than one edges.
The set of inner vertices of $\Gamma$ is denoted by $V(\Gamma)$.
A vertex of $\Gamma$ is a terminal if it is not a inner vertex.
\item
A tree is called a trivalent tree if the valency of any inner vertex
is three.
\item
A planar trivalent tree
$\Gamma$ 
with two colors, white and black, on its vertices
is called a binary tree if two ends of
any edge are differently colored. 
%A planar trivalent
%tree with a binary coloring is called binary
%tree.
\item
Let $(\Gamma,\C)$ be a binary tree.
A marking of $(\Gamma,\C)$ is to specify
choices of one edge adjacent to each 
inner vertex.
A binary tree $(\Gamma, \C)$ with a marking $M$ is 
called a marked binary tree and denoted by $(\Gamma,\C, M)$.
We connect the rest of edges by a small arc to illustrate
the marking. (See Figure \ref{pict marked binary tree} 
for an example of marked
binary tree.)
\end{enumerate}
\end{definition}

\begin{figure}[hbt]
\hskip 0.0in\includegraphics[scale=0.6]{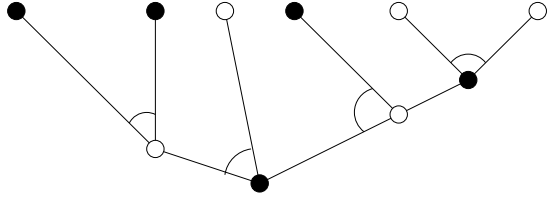}
\caption{Marked binary tree}
\label{pict marked binary tree}
\end{figure}

Let $\lambda_1,\dots, \lambda_m$ be points in $\mathbb P^1$
and set $\Sigma=\{\lambda_1, \dots, \lambda_m\}$.
We choose a base point $b$ in $\mathbb P^1-\Sigma$ and
a coordinate $t$ so that the base point $b$ corresponds to
the infinity. Let $(\Gamma,\C)$ be a binary tree
on $\mathbb P^1$, whose set of terminals is equal
to $\{\lambda_1, \dots, \lambda_m\}$. 
We construct a cyclic triple covering $C^0$ of $\mathbb P^1-\Sigma$
associated to the binary tree $(\Gamma, \C)$.
We prepare three copies $C^{(0)},C^{(1)},C^{(2)}$ of 
$\mathbb P^1-\Gamma$. We patch them according to the following
rule. (See Figure \ref{patching of sheets}.)
\begin{enumerate}
\item
If a point moves across an edge of $\Gamma$ looking white
vertex left, we change the sheet as
$C^{(0)}\to C^{(1)}\to C^{(2)}\to C^{(0)}$.  
\item
If a point moves across an edge of $\Gamma$ looking white
vertex right, we change the sheet as
$C^{(0)}\to C^{(2)}\to C^{(1)}\to C^{(0)}$.  
\end{enumerate}

\begin{figure}[hbt]
\hskip 0.0in\includegraphics[scale=0.4]{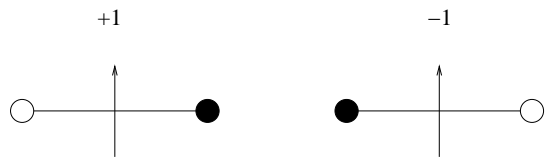}
\caption{Patching of sheets}
\label{patching of sheets}
\end{figure}

In this way, we get a compact Riemann surface
$C$.
Since the tree $\Gamma$ is trivalent, the covering 
$C \to \mathbb P^1$ is not branching at any point $v$ in
$V(\Gamma)$, and
%Then we have a Riemann surface branching at the
%set of terminals $\Sigma$. 
the branching index $a_i/3$ of $\lambda_i$ in $\Sigma$ is equal to
$1/3$ or $2/3$ if the color of the vertex $\lambda_i$ is
white or black, respectively.
Then the Riemann surface $C$ is isomorphic to the compactification of the 
Kummer covering of $\mathbb P^1$:
$$
y^3=\prod_{i=1}^m(x-\lambda_i)^{a_i}.
$$
The automorphism $\rho$ defined by 
$(x,y)\mapsto (x,\omega y)$
corresponds to the change of sheets
$C^{(0)}\to C^{(1)}\to C^{(2)}\to C^{(0)}$.  

\begin{lemma}
The genus $g=g(C)$ is equal to the number $\# V(\Gamma)$
of $V(\Gamma)$.
\end{lemma}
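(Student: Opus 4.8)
The plan is to compute $g(C)$ by the Riemann--Hurwitz formula for the triple covering $\pi\colon C\to\mathbb P^1$ and to compare it with an elementary count of the inner vertices of $\Gamma$. For the latter: $\Gamma$ is a finite tree with $m$ terminals, so it has $\#V(\Gamma)+m-1$ edges; since $\Gamma$ is trivalent each inner vertex has valency $3$, and (the tree having more than one vertex, so that every vertex is incident to at least one edge) each terminal has valency exactly $1$. Summing valencies over all vertices and equating with twice the number of edges gives $3\,\#V(\Gamma)+m=2(\#V(\Gamma)+m-1)$, i.e. $\#V(\Gamma)=m-2$.

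For the genus, I would first pin down the branch locus of $\pi$. Over $\lambda_i\in\Sigma$ the local monodromy is $a_i\in\{1,2\}\subset\mathbb F_3$, which is nonzero, so $\pi$ is totally ramified there, with a unique preimage of ramification index $3$. Over every inner vertex of $\Gamma$ the covering is unramified, as already observed in the construction (going once around a trivalent vertex crosses its three incident edges with a consistent sheet-transition, whose cube is the identity), and it is unramified over the remaining points of $\mathbb P^1-\Gamma$ as well; in particular no ramification is introduced over $x=\infty$, since a small disc around $\infty$ is disjoint from $\Gamma$ and therefore lifts to three disjoint discs in $C$. Hence the branch locus is exactly $\{\lambda_1,\dots,\lambda_m\}$ with ramification index $3$ at each point. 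Since $C$ is connected, Riemann--Hurwitz yields $2g(C)-2=3(2\cdot 0-2)+\sum_{i=1}^{m}(3-1)=2m-6$, so $g(C)=m-2$.

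Comparing the two computations gives $g(C)=m-2=\#V(\Gamma)$, which is the claim. I do not expect any genuine obstacle here; the only step that deserves a sentence of justification is the absence of ramification over $\infty$, and that is immediate from the gluing picture near $\infty$ described above (equivalently, from the congruence $\sum_i a_i\equiv 0\pmod 3$, which can be checked by counting the edges of the properly $2$-coloured tree $\Gamma$ through their white endpoints and through their black endpoints). Everything else is the handshake lemma for trees together with a one-line application of Riemann--Hurwitz.
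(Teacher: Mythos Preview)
Your argument is correct and follows essentially the same route as the paper: both count $\#V(\Gamma)$ via the handshake lemma for a trivalent tree (edges $=(3\#V(\Gamma)+m)/2$, combined with edges $=$ vertices $-1$) to get $\#V(\Gamma)=m-2$, and both invoke Riemann--Hurwitz to get $g(C)=m-2$. The only difference is that the paper simply quotes $g=\#\Sigma-2$ from the surrounding discussion, whereas you spell out the ramification analysis (including the point at $\infty$) in full; your extra justification via the bipartite edge count that $\sum_i a_i\equiv 0\pmod 3$ is a nice touch but not strictly needed given the gluing description.
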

\begin{proof}
Since the tree is trivalent, the number of edges is
equal to 
$(3\# V(\Gamma)+\#\Sigma)/2$.
Therefore the number of the vertices is
$$
(3\# V(\Gamma)+\#\Sigma)/2+1=\# V +\#\Sigma.
$$
Therefore we have
$\#V(\Gamma)+2=\#\Sigma$
and 
$
g=\#\Sigma -2=\#V(\Gamma).
$
\end{proof}
For a marking $M$ of the binary tree $(\Gamma, \C)$,
we define a symplectic basis $\{A_v,B_v\}_{v\in V(\Gamma)}$ 
as follows.
We define cycles $A_v$ and $B_v$ as follows.
By deleting a small neighborhood of $v$ from the tree
$\Gamma$, we decompose it into three blocks 
$\bold B1,\bold B2$ and $\bold B3$
as in Figure \ref{picture of cycles}.
We define a cycle $A_v$ by the path in Figure 
\ref{picture of cycles}.

\vskip 0.1in
\begin{figure}[htb]
\includegraphics[scale=0.3]{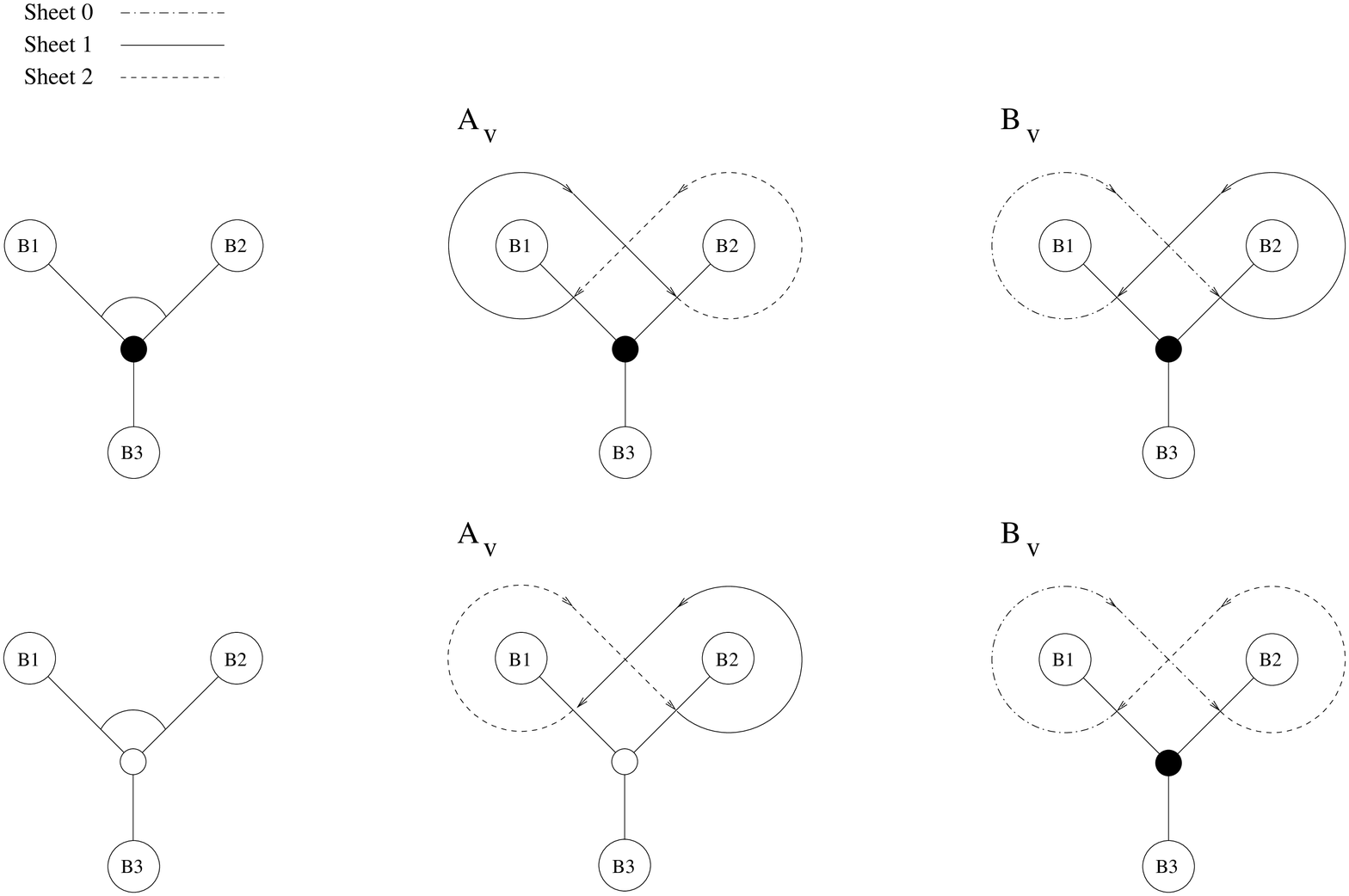}
\caption{Cycles}
\label{picture of cycles}
\end{figure}

If the vertex $v$ is white (resp. black), 
we define $B_v$ by $B_v=\rho^2(A_v)$ (resp. $B_v=\rho(A_v)$).
It is easy to see the following proposition.
\begin{proposition}
The set of cycles $\{A_v,B_v\}_{v\in V(\Gamma)}$
forms a symplectic basis. That is
$$
(A_v,A_w)=(B_v,B_w)=0, \qquad (A_v,B_w)=-\delta(v,w)
$$
for $v,w \in V(\Gamma)$.
\end{proposition}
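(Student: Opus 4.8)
The plan is to read the three families of identities off the explicit description of the cycles $A_v$ in Figure~\ref{picture of cycles}, using the automorphism $\rho$ to cut down the number of cases. Since $\rho$ is a biholomorphism of the closed Riemann surface $C$, the induced map on $H_1(C,\mathbb Z)$ preserves the intersection pairing, so $(\rho\alpha,\rho\beta)=(\alpha,\beta)$ for all $\alpha,\beta$. Combined with the defining relations $B_v=\rho^2 A_v$ ($v$ white) and $B_v=\rho A_v$ ($v$ black), every pairing among the $A$'s and $B$'s is, up to sign, of the form $(A_v,\rho^k A_w)$ with $v,w\in V(\Gamma)$ and $k\in\{0,1,2\}$; so it is enough to compute these. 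The identities $(A_v,A_v)=(B_v,B_v)=0$ need no computation, being self-pairings in the alternating intersection form of a closed oriented surface.

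Next, the off-diagonal vanishing. Writing $\pi\colon C\to\mathbb P^1$ for the covering map, the cycle $A_v$ of Figure~\ref{picture of cycles} uses only the three edges leaving $v$, so after an isotopy we may take $\pi(A_v)$ to lie in an arbitrarily small neighbourhood $W_v\subset\mathbb P^1$ of a short initial segment of the star of $v$. For two distinct inner vertices $v\neq w$, the two cuts of $\Sigma$ determined by the marked vertices $v$ and $w$ are always nested or disjoint (this is a property of subtrees of a tree), which lets us shrink $W_v$ and $W_w$ until they are disjoint, even when $v$ and $w$ are joined by an edge: one then only needs $\pi(A_v)$ to cross the common edge near $v$ and $\pi(A_w)$ to cross it near $w$. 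Hence $\pi(A_v)$ and $\pi(A_w)$ are disjoint in $\mathbb P^1$, so $A_v$ and every sheet-translate $\rho^kA_w$ are disjoint in $C$, and $(A_v,\rho^kA_w)=0$. This gives $(A_v,A_w)=(B_v,B_w)=(A_v,B_w)=0$ for $v\neq w$.

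It remains to show $(A_v,B_v)=-1$, which is a purely local computation near $v$. The projection $\pi(A_v)$ is an immersed loop whose only self-intersection point $P$ lies in $W_v$; over $P$ the curve $A_v$ runs through exactly two of the three points of the fibre, and the sheet-change rules of Figure~\ref{patching of sheets} show that the two sheets occupied there differ by a fixed nonzero amount, namely $+1$ or $-1$ according to the colour of $v$ (so that $A_v$ itself is embedded, consistent with $(A_v,A_v)=0$). Applying $\rho^2$ in the white case, resp.\ $\rho$ in the black case, shifts both sheet labels at $P$ by the same amount, and the power of $\rho$ has been chosen precisely so that the shifted pair meets the original pair in exactly one common sheet and avoids every other point of the fibre; consequently $A_v$ and $B_v$ meet transversally in a single point of $C$. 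Comparing the orientations of the two branches at that point, induced from the orientation of the loop in Figure~\ref{picture of cycles} and the chosen ordering of sheets, gives the sign $-1$, i.e.\ $(A_v,B_v)=-\delta(v,v)$.

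I expect the genuine work to be in the last paragraph: fixing consistent conventions for labelling the three sheets relative to the colour of $v$ and to the marking, and then tracking which sheets $A_v$ and its $\rho$-translate occupy over $P$ and with which sign the two branches cross there — this is exactly where the white/black distinction and the choice of $\rho^2$ versus $\rho$ are forced. The off-diagonal step is soft once one observes that the cycles of Figure~\ref{picture of cycles} can be confined to small neighbourhoods of their vertices, and the first step is purely formal.
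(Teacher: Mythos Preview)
The paper gives no proof of this proposition at all; it is introduced with the sentence ``It is easy to see the following proposition'' and left at that. Your proposal supplies exactly the kind of argument the authors presumably had in mind and is essentially correct: using $\rho$-invariance of the intersection form to reduce everything to pairings $(A_v,\rho^k A_w)$, then separating $v\neq w$ by isotoping each $\pi(A_v)$ into a small disk near $v$ (which is possible because $A_v$ only crosses the tree at the three edge-germs at $v$, as you say), and finally doing the single-point local sign check for $v=w$.

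One small clarification worth making in the off-diagonal step: when you say the ``two cuts of $\Sigma$ are nested or disjoint,'' what you actually need is the simpler observation that a planar loop crossing $\Gamma$ only at the three edge-germs incident to $v$ can be confined to an arbitrarily small disk around $v$, and two such disks centred at distinct inner vertices are disjoint. The ``nested or disjoint'' phrasing is a bit oblique here; the point is purely local to the stars of $v$ and $w$. In the diagonal step your description of the mechanism --- $A_v$ is embedded because the two sheets over the self-crossing of $\pi(A_v)$ differ, and the power of $\rho$ in the definition of $B_v$ is precisely what brings exactly one sheet into coincidence --- is the right picture; the honest work, as you note, is pinning down conventions well enough to read off the sign $-1$, and that really does require staring at Figure~\ref{picture of cycles} and Figure~\ref{patching of sheets} with a fixed sheet-labelling.
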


\subsection{Riemann constants}
\label{subsect:Riemann constants}
In this subsection, we describe the Riemann constant
of the curve $C$ with the symplectic basis
$\{A_v, B_v\}_{v\in V(\Gamma)}$ constructed in the last subsection.
Let $\tau$ be the normalized period matrix with respect to
$\{A_v,B_v\}_{v \in V(\Gamma)}$ and $p_0$ be one of 
the branched points of the covering $C\to \bold P^1$.
Let $S^{g-1}(C)$ be the $(g-1)$-th symmetric product of $C$,
$Q=\sum_{i=1}^{g-1}q_i$ be an element in $S^{g-1}$ and
$sym^{g-1}_{Q}:S^{g-1}(C)\to Jac(X)$ be the map defined by
$$
S^{g-1}(C) \ni \sum_{i=1}^{g-1}p_i \mapsto 
\sum_{i=1}^{g-1}(p_i)-(q_i) \in Pic(C).
$$
We assume that $q_i$ are branch points and $2Q=K_C$.
We define the theta zero divisor by
$$
(\vartheta(\tau)[0,0](\tau,z))_0=
\{[z]\in Jac(X)\mid \vartheta(\tau)[0,0](\tau,z)=0\}.
$$
Then by Riemann's theorem, there is a point 
$\varrho(p_0,\{A_i,B_i\})$ in $Jac(X)$
such that
\begin{align*}
(\vartheta(\tau)[0,0](\tau,z))_0=
\varrho(Q,\{A_i,B_i\}) -sym_{Q}^{g-1}(S^{g-1}(C)).
\end{align*}
The point $\varrho$ is called the Riemann constant.
By \cite{Mu},I,Cor. 3.1.1,p.166, we have 
$2\varrho(Q,\{A_i,B_i\})=0$ in $Jac(C).$
Let $\tau'$ be the normalized period matrix with respect
to the symplectic basis $\{\rho(A_i),\rho(B_i)\}$.
By the theta transformation formula, we have
$$
(\vartheta(\tau)[0,0](\tau,z))_0-
(\vartheta(\tau)[0,0](\tau',z))_0=m
$$
where
$$
m=\sum_{v\in \text{ black vertex} }\frac{1}{2}B_i+
\sum_{v\in \text{ white vertex} }\frac{1}{2}A_i.
$$
Thus, we have
\begin{align*}
m&=\varrho(Q,\{A_i,B_i\})-
\varrho(Q,\{\rho(A_i),\rho(B_i)\}) 
=(1-\rho)\varrho(Q,\{A_i,B_i\}).
\end{align*}
Therefore in the group of two torsion points in
$Jac(X)$, we have
\begin{align*}
\varrho(Q,\{A_i,B_i\})
=(1-\rho^2)m 
=\sum_{v}\frac{1}{2}(A_i+B_i). 
\end{align*}

\subsection{Description of $(1-\rho)$ torsion cycles}

We use the same notation as in the last subsection.
We fix a numbering $\lambda_1, \cdots, \lambda_m$ compatible
with the cyclic ordering
on the set of the terminals of the binary tree $\Gamma$.
Let $\overline{\gamma_i}$ be a loop in $\mathbb P^1-\Sigma$ 
starting 
the base point $b\in \mathbb P^1$ which turns around
$\lambda_i$ once anti-clockwise.
The point in $C^{(i)}$ over the point $b$
is denoted by $b(i)$ for $i=0,1,2$.
The lift of $\overline{\gamma_i}$
starting from $b(1)$ (resp. $b(2)$) is denoted by $\gamma_i$ 
if the color of the vertex $\lambda_i$ is white (resp. black).
Then the path $\gamma_i$ end at the point $b(2)$ (resp. $b(1)$)
if the color of the vertex $\lambda_i$ is white (resp. black).

\begin{figure}[htb]
\hskip 0.0in\includegraphics[scale=0.4]{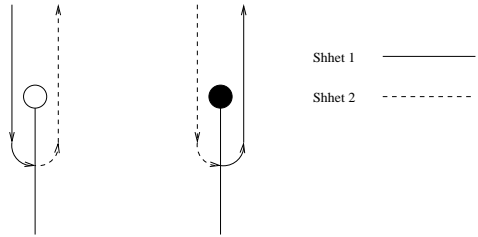}
\caption{Path $\gamma_i$}
\end{figure}
Then by choosing suitable integers $k_1, \dots, k_m$,
we have a relation 
\begin{equation}
\label{boundary relation}
\rho^{k_1}\gamma_1+\rho^{k_2}\gamma_2+ \cdots + \rho^{k_m}\gamma_m=0
\end{equation}
in the group $H_1(C, \mathbb Z)$.

The sign $\epsilon_i$ of a terminal $\lambda_i$
is assigned as $1$ or $-1$ 
if its color is white or black, respectively. 
Then the boundary of the chain $\epsilon_i\gamma_i$ is equal to 
$b(2)-b(1)$.
We define cycles $\delta_1, \dots, \delta_{m-1}$ by
$$
\delta_i=\epsilon_i\gamma_i - \epsilon_m\gamma_m.
$$
By the equation (\ref{boundary relation}), we have
\begin{equation}
\label{cycle relation}
\rho^{k_1}\epsilon_1\delta_1+
\rho^{k_2}\epsilon_2\delta_2+ \cdots + 
\rho^{k_{m-1}}\epsilon_{m-1}\delta_{m-1} =0.
\end{equation}
The element in $H_1(C,\mathbb Q/\mathbb Z)_{(1-\rho)}$
represented by
$\frac{1}{3}(1-\rho)\delta_i$ is denoted by
$\overline{\delta_i}$.

Here we give a combinatorial description of the group 
$H_1(C,\mathbb Q/\mathbb Z)_{(1-\rho)}$. Let $\bold V$ be the 
$\mathbb F_3$-vector space
$\oplus_{i=1}^m e_i\mathbb F_3$ generated by $e_1, \dots, e_m$.
We define homomorphisms $\Pi$ and $\operatorname{Diag}$ by
\begin{align}
\label{def of pi}
&\Pi:\bold V \ni \sum_ik_ie_i\mapsto \sum_i \epsilon_i k_i
\in \mathbb F_3,  \\
\nonumber
&\operatorname{Diag}:\mathbb F_3
\ni a \mapsto a\sum_i e_i \in \bold V.
\end{align}
Then the set $\{\epsilon_ie_i-\epsilon_me_m\}_{i=1, \dots, m-1}$ 
is a basis of $Ker(\Pi)$. 
By assigning the class of $\overline{\delta_i}$
in $H^1(C, \mathbb Q/\mathbb Z)_{(1-\rho)}$
to $\epsilon_ie_i-\epsilon_me_m$, we have a map
$Ker(\Pi) \to H^1(C, \mathbb Q/\mathbb Z)_{(1-\rho)}$ and as a consequence,
we have an isomorphism
\begin{equation}
\label{combinatorial isom of h1}
Ker(\Pi)/Im(\operatorname{Diag}) \overset{\simeq}\to
H^1(C, \mathbb Q/\mathbb Z)_{(1-\rho)}.
\end{equation}
They are $(m-2)$-dimensional vector spaces.
\begin{definition}
The quotient space $Ker(\Pi)/Im(\operatorname{Diag})$
is obtained from the information of the binary tree $(\Gamma,\C)$.
It is denoted by
$H(\Gamma,\C)$.

\end{definition}

\subsection{Combinatorial computation of $\displaystyle\frac{1-\rho}{3}A_v$}
In this subsection, we give a combinatorial rule to compute
the class $\overline{A_v}$ of 
$\displaystyle\frac{1-\rho}{3}A_v$ in $H_1(C,\mathbb Q/\mathbb Z)_{(1-\rho)}$.
We illustrate an element $\Lambda=\sum_i k_ie_i$
by writing $k_i$ to each
terminal $\lambda_i$ in the tree $\Gamma$.
(For example, see Figure \ref{pict the simplest case}.)

First, we consider the simplest case: $m=3$ and the color 
of $\lambda_i$ is white for $i=1,2,3$. There is only one inner black
vertex. We assume that $\lambda_3$ is marked. 
\begin{figure}[hbt]
%\hskip 0.1in
\includegraphics[scale=0.3]{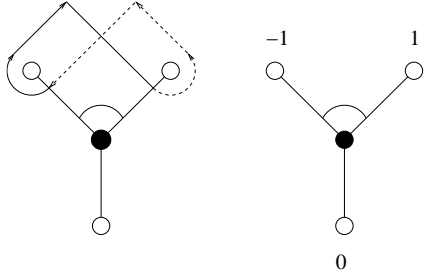}
\caption{Simplest case}
\label{pict the simplest case}
\end{figure}
Then we have $A_v=\gamma_2-\gamma_1$ as in 
Figure \ref{pict the simplest case}.
Via the isomorphism (\ref{combinatorial isom of h1}),
we have $\overline{A_v}=e_2-e_1$. 
%e define the weight $w_v(\lambda_i)\in \mathbb F_3$ of each terminal $\lambda_i$
%s the coefficient of $e_i$ in $\overline{A_v}$.
%See Figure \ref{pict the simplest case}.)
%
%To express this situation, we
%put one of numbers $\{0,1,-1\}$, 
%called weights at the corresponding vertex.
Similarly we can compute the image of $\overline{A_v}$
in the case: $m=3$ and the color of $\lambda_i$ is black.
In this case, we also have $\overline{A_v}=e_2-e_1$.
Note that the class of $\rho(\displaystyle\frac{1-\rho}{3}A_v)$
is equal to $\overline{A_v}$ in each case.

Next, we consider a general situation. 
Let $v$ be a black vertex connected to three blocks
$\bold B_1,\bold B_2,\bold B_3$. 
(A block may be one point.)
Recall that the cycle $A_v$ is given in
Figure \ref{General case}. 
\begin{figure}[hbt]
%\hskip 0.1in
\includegraphics[scale=0.3]{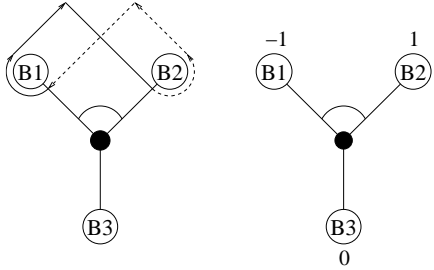}
\caption{General case}
\label{General case}
\end{figure}
We reduce the
computation $\overline{A_v}$ to the following ``local deformation rule''.
Figure \ref{block pict} is a local picture
contained in $\bold B_2$.
The blocks connected to an inner vertex $p$ are
denoted by $\bold C_1,\bold C_2$ and $\bold C_3$.
Let $\gamma$ be a path starting from
$b(2)$ ending at $b(1)$ turning around $\bold C_1$ and $\bold C_2$
as in Figure \ref{block pict}. Note that
the path
$\gamma$ is homotopic to $\rho(\gamma_{\bold C_1})
+\rho^2(\gamma_{\bold C_2})$, 
where $\gamma_{\bold C_i}$ is a path turning around the block 
$\bold C_i$ anti-clockwise.
Similarly, if $p$ is a white vertex, $\gamma$ is homotopic
to $\rho^2(\gamma_{\bold C_1})+\rho(\gamma_{\bold C_2})$. 
\begin{figure}[htb]
\hskip 0.0in\includegraphics[scale=0.4]{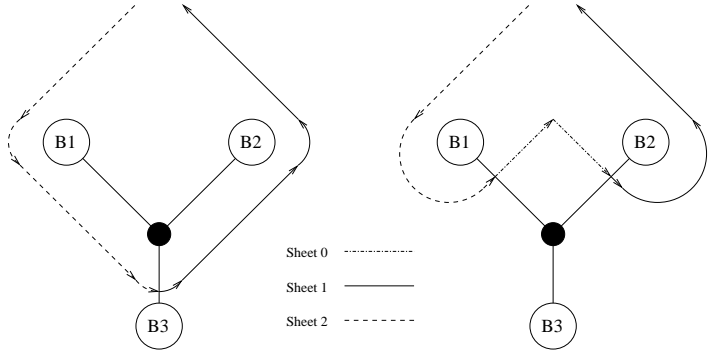}
\caption{Local deformation}
\label{block pict}
\end{figure}
By this local deformation rule, 
the coefficient of $e_i$ in $\overline{A_v}$
is equal to $-1,1$ and
$0$ if the terminal $\lambda_i$ belongs to $\bold B_1, 
\bold B_2$ and $\bold B_3$, respectively.
%where
%the block $\bold B_1,\bold B_2$ and $\bold B_3$ is 
%ordered clockweisely
%and $\bold B_3$ is connected to the marked edge. 

Finally, we have the following proposition.
\begin{proposition}
\begin{enumerate}
\item
The class $\overline{A_v} \in H_1(C,\mathbb Q/\mathbb Z)_{(1-\rho)}$
of $\frac{1}{3}(1-\rho)A_v$ is equal to the element
$$
-\sum_{\lambda_i\in \bold B_1\cap \Sigma}e_i
+\sum_{\lambda_i\in \bold B_2\cap \Sigma}e_i
=-\sum_{\lambda_i\in \bold B_2\cap \Sigma}e_i
+\sum_{\lambda_i\in \bold B_3\cap \Sigma}e_i
=-\sum_{\lambda_i\in \bold B_3\cap \Sigma}e_i
+\sum_{\lambda_i\in \bold B_1\cap \Sigma}e_i
$$
via the isomorphism (\ref{combinatorial isom of h1}).
\item
If $v$ is a black vertex (resp. a white vertex), then 
$\overline{A_v}\equiv\frac{1}{3}(-A_v+B_v)$ mod $H_1(C,\mathbb Z)$
(resp.
$\overline{A_v}\equiv\frac{1}{3}(A_v-B_v)$ mod $H_1(C,\mathbb Z)$
).
\item
%Since $\{A_v,B_v\}$ forms a basis of $H_1(C,\mathbb Z)$,
The set $\{\overline{A_v}\}_{v\in V(\Gamma)}$ forms a basis of 
$H_1(C,\mathbb Q/\mathbb Z)_{(1-\rho)}$.
\end{enumerate}
\end{proposition}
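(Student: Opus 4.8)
The three parts are largely formal once the local deformation rule of the previous subsection is in hand, so the plan is: for (1), read off the class from that rule and check the representative is well posed; for (2), substitute the definition of $B_v$; for (3), reduce everything modulo $3$ and count dimensions. For (1), the expression $\overline{A_v}=-\sum_{\lambda_i\in\bold B_1\cap\Sigma}e_i+\sum_{\lambda_i\in\bold B_2\cap\Sigma}e_i$ is precisely what iterating the local deformation of Figure~\ref{block pict} produces, as recorded just above the proposition; what remains is to see that this vector lies in $\operatorname{Ker}(\Pi)$ and to identify the two cyclic variants. The latter is immediate: since $v$ is an inner vertex, $\bold B_1\cap\Sigma$, $\bold B_2\cap\Sigma$, $\bold B_3\cap\Sigma$ partition $\Sigma$, so $\sum_{\bold B_1\cap\Sigma}e_i+\sum_{\bold B_2\cap\Sigma}e_i+\sum_{\bold B_3\cap\Sigma}e_i=\sum_i e_i=\operatorname{Diag}(1)$, whence the difference of any two of the three expressions is $\equiv-\operatorname{Diag}(1)\pmod 3$, an element of $\operatorname{Im}(\operatorname{Diag})$, and all three define the same class in $H(\Gamma,\C)$. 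Membership in $\operatorname{Ker}(\Pi)$ can be seen either a priori, because $\frac{1}{3}(1-\rho)A_v$ is an honest element of $H_1(C,\mathbb Q/\mathbb Z)_{(1-\rho)}$ and so its preimage under (\ref{combinatorial isom of h1}) automatically lies in $\operatorname{Ker}(\Pi)/\operatorname{Im}(\operatorname{Diag})$, or directly: the three neighbours of $v$ all carry the colour opposite to $v$, and a one-step induction on the number of inner vertices shows $\sum_{\lambda_i\in\bold B}\epsilon_i\equiv\pm1\pmod 3$ for any subtree $\bold B$ (with the sign fixed by the colour of the vertex of $\bold B$ on the cut edge, the inductive step contributing $1+1$ or $(-1)+(-1)$), so that $\sum_{\lambda_i\in\bold B_j\cap\Sigma}\epsilon_i\bmod 3$ is independent of $j$.

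For (2), I would substitute the definition of $B_v$ (a power $\rho^{j}A_v$ with $j\in\{1,2\}$ determined by the colour of $v$) into $\overline{A_v}=\frac{1}{3}(1-\rho)A_v$. In the case $\rho A_v=B_v$ one gets $\overline{A_v}=\frac{1}{3}(A_v-B_v)$ directly, with no reduction needed. In the case $\rho^2 A_v=B_v$, I use the relation $1+\rho+\rho^2=0$ in $\operatorname{End}(H_1(C,\mathbb Z))$ — valid because $C/\langle\rho\rangle=\mathbb P^1$ has genus $0$, so the image of $1+\rho+\rho^2$ lies in $H_1(C,\mathbb Q)^{\langle\rho\rangle}=0$ — to rewrite $\rho A_v=-A_v-B_v$, whence $(1-\rho)A_v=2A_v+B_v$ and therefore $\overline{A_v}=\frac{1}{3}(-A_v+B_v)+A_v\equiv\frac{1}{3}(-A_v+B_v)\pmod{H_1(C,\mathbb Z)}$. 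Matching the two cases to the colour of $v$ through the definition of $B_v$ yields the two stated formulas.

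For (3), by the lemma on the genus and the isomorphism (\ref{combinatorial isom of h1}) one has $\#V(\Gamma)=g=\dim_{\mathbb F_3}H_1(C,\mathbb Q/\mathbb Z)_{(1-\rho)}$, so it suffices to prove that $\{\overline{A_v}\}_{v\in V(\Gamma)}$ is $\mathbb F_3$-linearly independent. Since $3\,\overline{A_v}=(1-\rho)A_v\in H_1(C,\mathbb Z)$ — and, more generally, every $(1-\rho)$-torsion class of $H_1(C,\mathbb Q/\mathbb Z)$ is $3$-torsion, because $(1-\rho)^2=-3\rho$ and $\rho$ is invertible on $H_1(C,\mathbb Z)$ — the subgroup $H_1(C,\mathbb Q/\mathbb Z)_{(1-\rho)}$ sits inside $\frac{1}{3}H_1(C,\mathbb Z)/H_1(C,\mathbb Z)\cong H_1(C,\mathbb Z)/3H_1(C,\mathbb Z)$, and under this identification part (2) sends $\overline{A_v}$ to $\pm(A_v-B_v)\bmod 3$. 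As $\{A_v,B_v\}_{v\in V(\Gamma)}$ is an integral symplectic basis of $H_1(C,\mathbb Z)$, the classes $A_v-B_v\bmod 3$ are supported on pairwise disjoint pairs of basis elements and are thus linearly independent over $\mathbb F_3$; hence $\{\overline{A_v}\}_v$ is a basis. The only genuinely non-formal input is the local deformation rule feeding into (1), which has already been carried out; the single point that requires care is therefore the check that the combinatorial vector produced in (1) really represents a class, i.e. lies in $\operatorname{Ker}(\Pi)$ — the colour-parity argument above.
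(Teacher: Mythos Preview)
Your argument is correct and follows the same line as the paper, which in fact gives no formal proof at all: the proposition is simply asserted after the local deformation rule, so your write-up supplies exactly the details the paper leaves implicit. Your treatment of (1) via the partition $\Sigma=\bigsqcup_j(\bold B_j\cap\Sigma)$ and $\operatorname{Diag}(1)$, and of (3) via the embedding into $H_1(C,\mathbb Z)/3H_1(C,\mathbb Z)$ together with the symplectic basis, are the natural arguments and are fine.

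One caution on (2): you stop at ``matching the two cases to the colour of $v$ through the definition of $B_v$ yields the two stated formulas'' without actually performing the match. If you do it using the paper's convention (white $\Rightarrow B_v=\rho^2 A_v$, black $\Rightarrow B_v=\rho A_v$), your two computations give
\[
\text{black: }\ \overline{A_v}=\tfrac{1}{3}(A_v-B_v),\qquad
\text{white: }\ \overline{A_v}\equiv\tfrac{1}{3}(-A_v+B_v),
\]
which is the \emph{opposite} sign assignment from the one printed in the proposition. Your arithmetic is right; the discrepancy is a sign/colour inconsistency internal to the paper's conventions, not a flaw in your method. But since you explicitly claim the match ``yields the two stated formulas,'' you should either carry it out and note the swap, or phrase the conclusion as $\overline{A_v}\equiv\pm\tfrac{1}{3}(A_v-B_v)$ with the sign determined by the colour---which is in any case all that is used downstream (e.g.\ in (3) and in the computation of $\Lambda+\varrho$).
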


\subsection{A decomposition of a marked binary tree}
In this subsection, we consider a decomposition of a marked tree.
Let $(\Gamma,\C,M)$ be a marked binary tree and $E$ be its edge.
Let $p$ and $q$ be white and black vertices adjacent to $E$,
respectively.
%We consider the following condition $(D)$ for markings:
Let $\bold C_p$ and $\bold C_q$ be two blocks containing $p$ and $q$,
respectively. We define subspaces $H_p$ and $H_q$ of
$H(\Gamma,\C)$ by
\begin{align*}
&H_p=\{\sum_{v\in \Sigma} a_v e_v
\in H(\Gamma,\C)
\mid
a_v=a_{v'} \text{ for }v, v' \in \bold C_q\cap \Sigma \}, \\
&H_q=\{\sum_{v\in \Sigma} a_v e_v
\in H(\Gamma,\C)
\mid
a_v=a_{v'} \text{ for }v, v' \in \bold C_p\cap \Sigma \}. 
%&\bold V_q=\{\sum_v a_v e_v\mid
%a_v=a_q \text{ for }v \in \bold C_p \}.
\end{align*}
We remark that the condition in the definition of 
$H_p$ and $H_q$
does not depend on the representative modulo 
$Im(\operatorname{Diag})$.
By cutting the edge $E$ of tree and inserting black and white vertices
to $\bold C_p$ and $\bold C_q$, we get marked binary trees 
$(\Gamma_p,\C_p,M_p)$ and
$(\Gamma_q,\C_q,M_q)$ as in the following picture. 
%(precise marked picture here !)

\vskip 0.1in
\begin{figure}[htb]
\hskip 0.0in\includegraphics[scale=0.4]{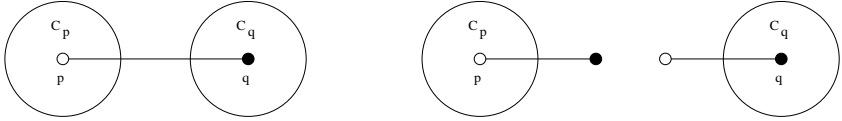}
\caption{Decomposition of a marked binary tree}
\end{figure}
It is easy to see the following lemma.
\begin{lemma}
\begin{enumerate}
\item
The element $\overline{A_v}$ belongs to the space
$H_p$ and $H_q$ if $v$ belongs to $\bold C_p$ and $\bold C_q$, 
respectively.
\item
The space $H_1(C,\mathbb Q/\mathbb Z)_{(1-\rho)}$
is the direct sum of $H_p$ and $H_q$. The sets
$\{\overline{A_v}\}_{v\in V(\Gamma_p)}$ and 
$\{\overline{A_v}\}_{v\in V(\Gamma_q)}$ 
form bases of $H_p$ and $H_q$, respectively.
\item
The vector spaces $H_p$ and $H_q$ are naturally isomorphic
to $H(\Gamma_p,\C_p)$ and $H(\Gamma_q,\C_q)$, respectively.
\end{enumerate}
\end{lemma}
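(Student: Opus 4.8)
The plan is to argue entirely inside the combinatorial model $H(\Gamma,\C)=Ker(\Pi)/Im(\operatorname{Diag})$, using the description of the classes $\overline{A_v}$ from the previous subsection. Write $\Sigma_p=\bold C_p\cap\Sigma$ and $\Sigma_q=\bold C_q\cap\Sigma$, so that $\Sigma=\Sigma_p\sqcup\Sigma_q$; also $V(\Gamma)$ is the disjoint union of $V(\Gamma_p)=V(\Gamma)\cap\bold C_p$ and $V(\Gamma_q)=V(\Gamma)\cap\bold C_q$ (the grafted vertex is a terminal, while $p$ and $q$ stay trivalent), and by the genus count above $\#V(\Gamma_p)=\#\Sigma_p-1$, $\#V(\Gamma_q)=\#\Sigma_q-1$. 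The one arithmetic input I will use repeatedly is this: the relation $\sum_{\lambda_i\in\Sigma}\epsilon_i\equiv0\pmod3$---which is exactly what makes $\operatorname{Diag}$ land in $Ker(\Pi)$---applied to the trees $\Gamma_p$ and $\Gamma_q$, together with the fact that the terminal grafted onto the white vertex $p$ is black (sign $-1$) while the one grafted onto the black vertex $q$ is white (sign $+1$), gives $\sum_{\lambda_i\in\Sigma_p}\epsilon_i\equiv1$ and $\sum_{\lambda_i\in\Sigma_q}\epsilon_i\equiv2\pmod3$. Part (1) is then immediate: if $v\in\bold C_p$, deleting a small neighbourhood of $v$ splits $\Gamma$ into three blocks, and since $\bold C_q$ is connected and avoids $v$ it lies inside one of them; by the previous subsection we may take the representative of $\overline{A_v}$ whose coefficients vanish on that block, so they are in particular constant on $\Sigma_q$, i.e. $\overline{A_v}\in H_p$. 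The case $v\in\bold C_q$ is symmetric.

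For part (2), I first count dimensions. The vectors of $\bold V$ that are constant on $\Sigma_q$ form a $(\#\Sigma_p+1)$-dimensional subspace which contains $Im(\operatorname{Diag})$; the condition $\Pi=0$ is one nontrivial linear equation on it (the coefficient of $a_i$ is $\epsilon_i\neq0$ for $\lambda_i\in\Sigma_p$), and passing to the quotient by the one-dimensional $Im(\operatorname{Diag})$ removes one more dimension, so $\dim H_p=\#\Sigma_p-1=\#V(\Gamma_p)$, and likewise $\dim H_q=\#\Sigma_q-1=\#V(\Gamma_q)$. Next, $H_p\cap H_q=0$: a class lying in both has a representative $a\sum_{\lambda_i\in\Sigma_p}e_i+c\sum_{\lambda_i\in\Sigma_q}e_i$; subtracting $\operatorname{Diag}(c)$ reduces us to $c=0$, and then $0=\Pi=a\sum_{\lambda_i\in\Sigma_p}\epsilon_i\equiv a\pmod3$, so the representative vanishes and the class is $0$. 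Therefore $\dim(H_p\oplus H_q)=(\#\Sigma_p-1)+(\#\Sigma_q-1)=\#\Sigma-2=g=\dim H(\Gamma,\C)$, whence $H(\Gamma,\C)=H_p\oplus H_q$. Finally, by part (1) the basis $\{\overline{A_v}\}_{v\in V(\Gamma)}$ of $H(\Gamma,\C)$ is the disjoint union of $\{\overline{A_v}\}_{v\in V(\Gamma_p)}\subseteq H_p$ and $\{\overline{A_v}\}_{v\in V(\Gamma_q)}\subseteq H_q$; a basis of a direct sum that splits as a disjoint union of subsets of the two summands necessarily restricts to a basis of each summand (project onto $H_p$ to see the first subset spans $H_p$, and its cardinality already equals $\dim H_p$), which is the last assertion of (2).

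For part (3) I will exhibit the isomorphism directly. Let $\Pi_p,\operatorname{Diag}_p$ be the maps attached to $(\Gamma_p,\C_p)$ and let $e_{\mathrm{new}}$ be the coordinate of the black terminal inserted at $p$. Define $\varphi_p\colon H_p\to H(\Gamma_p,\C_p)$ by sending the class of $\sum_{\lambda_i\in\Sigma}a_ie_i$, where $a_i=c$ for all $\lambda_i\in\Sigma_q$, to the class of $\sum_{\lambda_i\in\Sigma_p}a_ie_i+c\,e_{\mathrm{new}}$. Replacing the chosen representative by one differing by $\operatorname{Diag}(t)$ changes the image by $\operatorname{Diag}_p(t)$, so $\varphi_p$ is well defined; and since $\epsilon_{\mathrm{new}}=-1$ and $2\equiv-1\pmod3$, the source condition $\Pi(\sum a_ie_i)=\sum_{\lambda_i\in\Sigma_p}\epsilon_i a_i+2c\equiv0$ coincides with the image condition $\Pi_p(\sum_{\lambda_i\in\Sigma_p}a_ie_i+c\,e_{\mathrm{new}})=\sum_{\lambda_i\in\Sigma_p}\epsilon_i a_i-c\equiv0$, so $\varphi_p$ lands in $Ker(\Pi_p)/Im(\operatorname{Diag}_p)$. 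It is injective (if the image lies in $Im(\operatorname{Diag}_p)$ then $a_i=c$ for all $\lambda_i\in\Sigma_p$ too, so the original class is $0$), and $\dim H_p=\#\Sigma_p-1=\dim H(\Gamma_p,\C_p)$, so $\varphi_p$ is an isomorphism; $\varphi_q$ is defined symmetrically with the two colours exchanged. This isomorphism is the natural one, in that $\varphi_p$ carries $\overline{A_v}$ (for $v\in V(\Gamma_p)$) to the corresponding class for $(\Gamma_p,\C_p)$: the block decomposition of $\Gamma_p$ at $v$ agrees with that of $\Gamma$ at $v$ except that the portion inside $\bold C_q$ is collapsed to the single terminal $\lambda_{\mathrm{new}}$, which picks up exactly the common coefficient $c$. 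The only real difficulty here is the bookkeeping---keeping straight that the terminal grafted onto the white side is black and the one grafted onto the black side is white, hence the congruences $\sum_{\Sigma_p}\epsilon_i\equiv1$, $\sum_{\Sigma_q}\epsilon_i\equiv2\pmod3$---after which the transversality $H_p\cap H_q=0$, the dimension count, and the well-definedness of $\varphi_p,\varphi_q$ are all automatic.
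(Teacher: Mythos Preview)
Your proof is correct and complete. The paper itself offers no proof at all---it simply prefaces the lemma with ``It is easy to see the following lemma''---so there is nothing to compare against beyond noting that your argument fleshes out exactly the kind of combinatorial verification the authors regard as routine: using the block description of $\overline{A_v}$ from the preceding proposition for part~(1), a dimension count plus the transversality $H_p\cap H_q=0$ for part~(2), and the evident ``collapse $\Sigma_q$ to a single new terminal'' map for part~(3). The one point worth being explicit about, which you handle correctly, is that the congruences $\sum_{\lambda_i\in\Sigma_p}\epsilon_i\equiv 1$ and $\sum_{\lambda_i\in\Sigma_q}\epsilon_i\equiv 2\pmod 3$ (equivalently, that neither partial sum vanishes in $\mathbb F_3$) are what make $H_p\cap H_q=0$ and make $\Pi$ and $\Pi_p$ agree on the relevant subspaces; this is implicit in the paper's setup but never stated.
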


\section{Stable degeneration of triple coverings of $\mathbb P^1$}
In this section, we compare a decomposition of a marked binary tree
and a stable degeneration of triple coverings of $\mathbb P^1$.

Let $\lambda_1,\dots, \lambda_m$ be points in $\mathbb C-\{0\}$.
Let $(\Gamma, \C, M)$ be a marked binary tree whose set of terminals
is $\{\lambda_1,\cdots ,\lambda_m\}$. 
The branching indices of $\lambda_1, \dots, \lambda_m$ are written
as $a_1/3, \cdots, a_m/3$.
We choose an edge $E$ and obtain 
two blocks $\bold C_p,\bold C_q$ by cutting $E$ as in 
the last section. By renumbering the set of terminals,
we can assume that 
$\bold C_p\cap \Sigma =\{\lambda_1, \dots,\lambda_l\}$ and 
$\bold C_q\cap \Sigma =\{\lambda_{l+1},\cdots, \lambda_m\}$
and that the cyclic order
of the terminals is equal to  $\lambda_1, \dots, \lambda_m$.
We consider a deformation of the set of terminals parametrized by $t$
as
$$
\lambda_i(t)=t\lambda_i \quad (i=1, \dots, l),\qquad
\lambda_i(t)=\lambda_i \quad (i=l+1, \dots, m).
$$
Suppose that $\lambda_1, \dots, \lambda_l$ are sufficiently close to $0$.
We consider a family of triple coverings $C(t)$ of $\mathbb P^1$
parametrized by $t\in \Delta^*(t)=\{t\in \mathbb C\mid 0<\mid t\mid<1\}$ as
\begin{equation}
\label{deformation of equation}
y^3=\prod_{i=1}^m(x-\lambda_i(t))^{a_i}.
\end{equation}
This family extends to a stable model by the base change
$\varepsilon^3=t$. We write down an affine birational model as follows. 
Let $\Cal X$ be a submanifold of $\mathbb C^1\times \mathbb C^1\times 
\Delta(\varepsilon)$
defined by
$$
\Cal X=\{(x, \xi, \varepsilon)\in 
\mathbb P^1\times \mathbb P^1\times \Delta\
\mid x\xi=\varepsilon^3\}.
$$
Let $a, b$ be elements in 
$\{1,2\}$ satisfying $\sum_{i=1}^la_i\equiv a$ (mod 3)
and $\sum_{i=l+1}^ma_i\equiv b$ (mod 3).
Let $\Cal Y$ be the triple covering of $\Cal X$ defined by
new coordinates $y_1, y_2$ satisfying
\begin{align}
\label{first component}
& y_1^3=x^a\prod_{i=1}^l(1-\lambda_i\xi)^{a_i}
\prod_{i=l+1}^m(x-\lambda_i)^{a_i}, \\
\label{second component}
& y_2^3=\xi^b\prod_{i=1}^l(1-\lambda_i\xi)^{a_i}
\prod_{i=l+1}^m(x-\lambda_i)^{a_i}, 
\end{align}
with $\xi y_1=\varepsilon^ay_2,xy_2=\varepsilon^by_1$.
Then $\varphi:\Cal Y\to \Delta(\epsilon)$ is a family of affine 
curves over $\Delta(\epsilon)$.
The fiber of $\Cal Y$ at the origin $\tau=0$
is the union of two triple coverings of
$\mathbb P^1$ connected at one point. One component is the restriction of
(\ref{first component})
to $\xi=0$ and the other is that of (\ref{second component})
to $x=0$.

We define a family of marked binary trees 
$(\Gamma_p,\C_p,M_p)(t)\coprod (\Gamma_q,\C_q,M_q)(t)$
with two components parametrized by $t$.
We set $(\Gamma_q,\C_q,M_q)(t)$ as a constant family,
since $\bold C_q\cap\Sigma=\{\lambda_{l+1},\dots, \lambda_m\}$.
We set the tree $\Gamma_p(t)$ as the multiplication of the original
tree by $t$ for $t\in \Delta^*$ since 
$C_p\cap \Sigma=\{\lambda_1,\dots, \lambda_l\}$,
Its coloring and marking are naturally defined by the multiplication
by $t$. By using the coordinate $\xi$, we see that 
this tree is continued to $\varepsilon=0$.
\begin{proposition}
The monodromy around $\varepsilon=0$ acts trivially on
the family of the relative homology $R_1\varphi_*\mathbb Z$.
\end{proposition}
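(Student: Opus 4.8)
The plan is to exploit the fact, already established in the text, that the order--three base change $\varepsilon^{3}=t$ turns the family (\ref{deformation of equation}) into a stable --- in particular semistable --- family $\varphi:\Cal Y\to\Delta(\varepsilon)$, and to read off triviality of the monodromy on $R_1\varphi_*\mathbb Z$ from the shape of the central fibre $Y_0=\varphi^{-1}(0)$. For a semistable degeneration of curves the local monodromy $T$ on the first homology of a nearby smooth fibre is unipotent, and by the Picard--Lefschetz description of such monodromies $T$ is the product of the Dehn (Picard--Lefschetz) twists along the vanishing cycles $\delta_e$ attached to the nodes $e$ of $Y_0$; equivalently $N:=T-I$ acts by $N(\gamma)=\sum_e\langle\gamma,\delta_e\rangle\,\delta_e$. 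So it suffices to prove that every vanishing cycle is null-homologous on the nearby fibre.

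First I would identify $Y_0$ from (\ref{first component})--(\ref{second component}): it is the union of the triple covering $Y_0^{(1)}$ of the $x$-line (the restriction of (\ref{first component}) to $\xi=0$, carrying the fixed points $\lambda_{l+1},\dots,\lambda_m$) and the triple covering $Y_0^{(2)}$ of the $\xi$-line (the restriction of (\ref{second component}) to $x=0$, carrying the rescaled cluster $\{1/\lambda_1,\dots,1/\lambda_l\}$), glued transversally at the single point over $x=\xi=0$. Each component is irreducible because the local branching index at the gluing point equals $a$, respectively $b$, and $a,b\in\{1,2\}$ are prime to $3$. Hence $Y_0$ has exactly one node, whose dual graph is a single edge joining two vertices; this edge is a bridge, so the node is separating. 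Consequently the corresponding vanishing cycle is a simple closed curve on the nearby fibre $C(t)$ that separates it into the two pieces specialising to $Y_0^{(1)}$ and $Y_0^{(2)}$, hence is null-homologous. Therefore $N=0$ and $T=I$ on $H_1(C(t),\mathbb Z)$, i.e. on $R_1\varphi_*\mathbb Z$ near $\varepsilon=0$. The only point needing care is the bookkeeping that lets us pass from the affine model $\Cal Y$ to a proper one: one adjoins the points over $x=\infty$ and $\xi=\infty$, which form a constant (hence monodromy-fixed) family of sections and do not affect the monodromy on $H_1$.

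An alternative, more in keeping with the combinatorics of Sections 3--4, is to avoid semistable reduction altogether and instead exhibit a locally constant extension over $\Delta(\varepsilon)$ of the symplectic basis $\{A_v,B_v\}_{v\in V(\Gamma)}$ itself, using its decomposition into the parts indexed by $V(\Gamma_p)$ and $V(\Gamma_q)$. For $v\in V(\Gamma_q)$ the cycles $A_v,B_v$ can be drawn over the part of $\mathbb P^1$ near the fixed points $\lambda_{l+1},\dots,\lambda_m$, which is literally independent of $t$, so they extend trivially. For $v\in V(\Gamma_p)$ one passes to the coordinate $\xi=\varepsilon^{3}/x$ of (\ref{second component}): over the $\xi$-region surrounding $\{1/\lambda_1,\dots,1/\lambda_l\}$ together with $\xi=\infty$ the covering extends continuously up to $\varepsilon=0$, as noted just before the proposition, so $A_v,B_v$ again extend. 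I expect the genuine obstacle to lie precisely here --- making the $\xi$-rescaling uniform as $\varepsilon\to0$, or equivalently checking that the rigid rotation of the cluster $\{t\lambda_1,\dots,t\lambda_l\}$ (a Dehn twist on the base along a circle enclosing branch points of total index $a\not\equiv0\pmod 3$) lifts, after the degree--three base change, to a map homologous to the identity on the triple covering. The cohomological route of the previous paragraph sidesteps this difficulty, which is why I would present it as the main argument.
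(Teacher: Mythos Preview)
Your main argument via Picard--Lefschetz is correct: after the base change the central fibre has a unique node, it is separating (the dual graph is a single edge joining two vertices), so the vanishing cycle is null-homologous on the nearby fibre and $T=I$ on $H_1$.

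The paper, however, takes exactly your ``alternative'' route, and in a single sentence: it observes that the symplectic basis $\{A_v,B_v\}_{v\in V(\Gamma)}$ is the union of the bases attached to $(\Gamma_p,\C_p,M_p)$ and $(\Gamma_q,\C_q,M_q)$, and that each piece extends continuously to $\varepsilon=0$. The concern you raise about lifting the rotation of the cluster is in fact already dispatched by the coordinate change set up just before the proposition: from $x\xi=\varepsilon^3$ one sees that the moving points $\lambda_i(t)=\varepsilon^3\lambda_i$ ($i\le l$) sit at the \emph{fixed} positions $\xi=1/\lambda_i$, while the stationary points $\lambda_j$ ($j>l$) all converge to $\xi=0$. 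Hence the cycles indexed by $V(\Gamma_p)$ can be drawn in the $\xi$-plane around branch points that do not move with $\varepsilon$, and they extend to $\varepsilon=0$ with no rotation to lift at all; the cycles indexed by $V(\Gamma_q)$ are already $\varepsilon$-independent in the $x$-plane. So the ``obstacle'' you anticipated disappears once one works in the right chart.

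As to what each approach buys: your Picard--Lefschetz argument is self-contained, basis-free, and would generalise without change to more complicated central fibres (one only needs to check that all nodes are separating). The paper's approach leans on the combinatorics of Section~3 but yields more than bare triviality: it explicitly identifies the limiting basis with the union of the bases on the two components, which is precisely what is used in Section~\ref{section degeneration} to compute the limits of period integrals and their determinants.
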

\begin{proof}
For a point $t=1$,
the symplectic basis $\{A_v,B_v\}_{v\in V(\Gamma)}$
associated to $(\Gamma,\C,M)$ is equal to the union of 
those associated to $(\Gamma_p,\C_p,M_p)$ and $(\Gamma_q,\C_q,M_q)$.
They are extended continuously to $\varepsilon=0$.
\end{proof}
\begin{remark}
Repeating this degeneration process, any triple covering
of $\mathbb P^1$ degenerates to the union of elliptic curves
$E_v$ ($v\in V(\Gamma)$) with complex multiplication with 
$\mathbb Z[\omega]$, which is called a totally degenerated curve.
The binary tree is equal to the dual graph of the totally degenerated
curve. The color of an inner vertex $v$ corresponds to the eigen value
of the action of $\rho$ on $H^0(E_v,\Omega^1)$.
\end{remark}

\section{Degenerations of period integrals and their determinants}
\label{section degeneration}
\subsection{Deformation of period integrals}
\label{small deformation}
In this section, we study degenerations of period integrals
for the simplest case. We use the same notations 
$\lambda_1, \dots,\lambda_m$ for branch points,
$a_1/3, \dots, a_m/3$ for branching indices and a marked binary tree 
$(\Gamma, \C, M)$. 
Therefore the color of a vertex $\lambda_i$ is white if
and only if $a_i=1$.
In this section, we assume that
$\lambda_{i}$ and $\lambda_{i+1}$ is adjacent to 
a common inner vertex
$p$. Thus the colors of $\lambda_{i}$ and $\lambda_{i+1}$
are same. The third vertex adjacent to $p$ is denoted by $q$.
Moreover we assume that the edge to $q$ is marked for the vertex $p$.
If the vertices $\lambda_{i}$ and $\lambda_{i+1}$ are white,
the situation is illustrated in Figure \ref{simple degeneration}.

\begin{figure}[hbt]
\hskip 0.4in\includegraphics[scale=0.4]{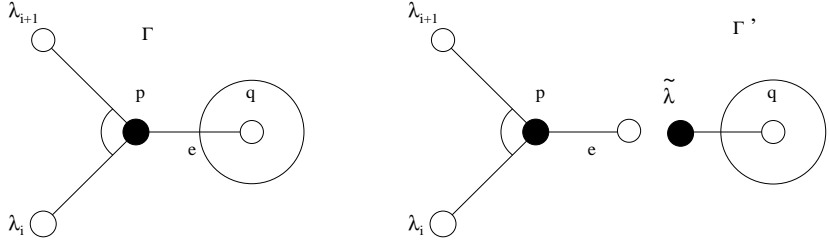}
\caption{Simple degeneration}
\label{simple degeneration}
\end{figure}

Let $\{A_v,B_v\}$ be the symplectic basis of $H_1(C,\mathbb Z)$.
Let $\tilde \lambda$ be a point different from
$\lambda_j$ for $j\neq i,i+1$ and consider the family of points
$\lambda_j(t)$ defined by $\lambda_j(t)=\lambda_j$
for $j\neq i,i+1$ and $\lambda_k(t)=\tilde\lambda+t(\lambda_k-\tilde\lambda)$
for $k=i,i+1$. We consider the family of curves
(\ref{deformation of equation})
branched at 
$\Sigma(t)=\{\lambda_1(t),\dots, \lambda_m(t)\}$.
By changing parameter $t$ to $\varepsilon$ defined as $t=\varepsilon^3$,
we get a stable degeneration associated to the above
decomposition of the binary tree as in the last section. 
The tree containing
the vertex $q$ obtained by this decomposition is
denoted by $(\Gamma', \C',M')$.

Let $y_1$ and $y_2$ be rational functions on $C$ defined by
\begin{align*}
& y_1^3=\prod_{j=1}^m(x-\lambda_j(t))^{a_j}, \\
& y_2^3=\prod_{j=1}^m(x-\lambda_j(t))^{b_j},
\end{align*}
where $b_j=3-a_j$. 
Let $d_1=\sum_j a_j-1$ and $d_2=\sum_j b_j-1$.
We define a family of differential forms
$\eta_j(t)$ for $j=1, \dots, d_1+d_2$ by
\begin{align*}
\eta_j&=\frac{x^{j-1}dx}{y_1} \quad\text{ for }j=1,\dots, d_1, \\
\eta_{j+d_1}&=
\frac{x^{j-1}dx}{y_2} \quad\text{ for }j=1,\dots, d_2.
\end{align*}
Then $\{\eta_1,\dots, \eta_{d_1+d_2}\}$
is a basis of $H^0(X,\Omega^1)$.
\begin{proposition}
Assume that the colors of $\lambda_i$
and $\lambda_{i+1}$ are white.
We set
$$
\widetilde{\eta_{j+d_1}}=
\frac{(x-\tilde\lambda)^{j-1}dx}{y_1} 
\quad\text{ for }j=1,\dots, d_2.
$$
\begin{enumerate}
\item
We have
\begin{align*}
&\lim_{t\to 0}\int_{B_p}\eta_j(t)=0 \quad 
\text{ for } j=1, \dots, d_1, \\
&\lim_{t\to 0}\int_{B_p}\widetilde{\eta_j(t)}=0 \quad 
\text{ for } j=d_1+2, \dots, d_1+d_2, 
\end{align*}
and
$$
\lim_{t\to 0}
t\cdot (\lambda_i-\lambda_{i+1})
\big(
\int_{B_p}
\widetilde{\eta_{d_1+1}(t)}\big)^3=
B^*(\frac{1}{3},\frac{1}{3})^3
\prod_{k\neq i,i+1}(\tilde\lambda-\lambda_k)^{-b_k}.
$$
Here we set $B^*(\frac{1}{3},\frac{1}{3})=(\omega-1)B(\frac{1}{3},\frac{1}{3})$.
\item
The limit
$$
\lim_{t\to 0}
\int_{B_v}
\widetilde{\eta_{d_1+1}(t)}
$$
is finite for $v\neq p$.
\item
We set
\begin{align*}
\overline{\eta_j}&=
\frac{x^{j-1}dx}{\overline{y_1}} \quad\text{ for }j=1,\dots, d_1, \\
\overline{\eta_{j+d_1}}&=
\frac{(x-\tilde\lambda)^{j-1}dx}
{\overline{y_2}} \quad\text{ for }j=1,\dots, d_2-1,
\end{align*}
where
\begin{align*}
& \overline{y_1}^3=(x-\tilde\lambda)^2
\prod_{k\neq i,i+1}^m(x-\lambda_k)^{a_k}, \\
& \overline{y_2}^3=
(x-\tilde\lambda)
\prod_{k\neq i,i+1}^m(x-\lambda_k)^{b_k}.
\end{align*}
Then we have
\begin{align*}
&\lim_{t\to 0}\int_{B_v}\eta_j(t)=
\int_{B_v}\overline{\eta_j} \quad 
\text{ for } j=1, \dots, d_1, \\
&\lim_{t\to 0}\int_{B_v}\widetilde{\eta_{j}(t)}=
\int_{B_v}\overline{\eta_{j-1}} 
 \quad 
\text{ for } j=d_1+2, \dots, d_1+d_2,
\end{align*}
for $v\neq p$.
\end{enumerate}
\end{proposition}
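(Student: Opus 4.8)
The plan is to analyze the period integrals $\int_{B_p}\eta_j(t)$ and $\int_{B_v}\widetilde{\eta_j(t)}$ by understanding precisely how the cycle $B_v$ behaves as $t\to 0$, and then extract the leading asymptotics by an explicit change of variables near the vanishing cycle. Recall from Section 4 that the cycle $B_p = \rho^2(A_p)$, where $A_p$ is the path that, in the picture of the simple degeneration, links the pair $\{\lambda_i(t),\lambda_{i+1}(t)\}$ (which are collapsing to $\tilde\lambda$) to the rest of the branch points. As $t\to 0$ the two points $\lambda_i(t),\lambda_{i+1}(t)$ coalesce, so the part of $A_p$ encircling them shrinks; this is exactly the vanishing cycle of the degeneration, which in the stable model corresponds to the node joining the two components. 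For the other cycles $B_v$ with $v\neq p$, the corresponding paths stay away from the collision locus, so their limits are computed on the normalization, i.e. on the component carrying vertex $q$ — this is what part (3) asserts.

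First I would set up the local picture: write $\lambda_i(t)=\tilde\lambda+t(\lambda_i-\tilde\lambda)$, $\lambda_{i+1}(t)=\tilde\lambda+t(\lambda_{i+1}-\tilde\lambda)$, and substitute $x=\tilde\lambda+t\,u$. Then
$$
(x-\lambda_i(t))(x-\lambda_{i+1}(t)) = t^2(u-(\lambda_i-\tilde\lambda))(u-(\lambda_{i+1}-\tilde\lambda)),
$$
while $(x-\lambda_k)=(\tilde\lambda-\lambda_k)+tu \to (\tilde\lambda-\lambda_k)$ for $k\neq i,i+1$. Hence on the part of $C$ over the small disc $|x-\tilde\lambda|\lesssim |t|$, one has, with $a_i=a_{i+1}=1$ (white vertices),
$$
y_1^3 = (x-\lambda_i(t))(x-\lambda_{i+1}(t))\prod_{k\neq i,i+1}(x-\lambda_k)^{a_k},
$$
so that $y_1 \sim t^{2/3}\,\big(\prod_{k\neq i,i+1}(\tilde\lambda-\lambda_k)^{a_k}\big)^{1/3}\cdot\big((u-(\lambda_i-\tilde\lambda))(u-(\lambda_{i+1}-\tilde\lambda))\big)^{1/3}$. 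For $\widetilde{\eta_{d_1+1}}=\frac{dx}{y_1}$ the integrand becomes
$$
\frac{dx}{y_1} = \frac{t\,du}{y_1} \sim t^{1/3}\Big(\prod_{k\neq i,i+1}(\tilde\lambda-\lambda_k)^{-a_k}\Big)^{1/3}\frac{du}{\big((u-(\lambda_i-\tilde\lambda))(u-(\lambda_{i+1}-\tilde\lambda))\big)^{1/3}}.
$$
Integrating over the vanishing cycle (a path winding around $u=\lambda_i-\tilde\lambda$ and $u=\lambda_{i+1}-\tilde\lambda$, transported by $\rho^2$) produces, up to a factor of the form $(\lambda_i-\lambda_{i+1})^{1/3}$ coming from rescaling $u$ to normalize the two roots, a multiple of the Beta integral $B(\tfrac13,\tfrac13)$; the cube of this local contribution is then $\sim t^{-1}(\lambda_i-\lambda_{i+1})^{-1}B^*(\tfrac13,\tfrac13)^3\prod_{k\neq i,i+1}(\tilde\lambda-\lambda_k)^{-a_k}$, where the $\rho^2$-transport and the orientation of the winding path account for the factor $(\omega-1)$ in $B^*$. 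Note $-a_k$ here equals $b_k-3$ and $\prod(\tilde\lambda-\lambda_k)^{-3}$ is absorbed into the choice of branch, which is why only $-b_k$ appears in the stated formula. Tracking these prefactors carefully gives the displayed limit in (1). The vanishing of $\int_{B_p}\eta_j(t)$ for $j\le d_1$ follows because those $\eta_j$ carry positive powers of $x$ relative to $\widetilde\eta_{d_1+1}$ near $x=\tilde\lambda$, equivalently the integrand has no pole on the collapsing cycle strong enough to survive; the vanishing of $\int_{B_p}\widetilde{\eta_j}$ for $j\ge d_1+2$ is similar — the numerator $(x-\tilde\lambda)^{j-1-d_1}$ kills the singularity.

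For part (2), the same substitution shows that for $v\neq p$ the cycle $B_v$ misses the disc $|x-\tilde\lambda|\lesssim|t|$, so $\int_{B_v}\widetilde{\eta_{d_1+1}}=\int_{B_v}\frac{dx}{y_1}$ is an integral over a fixed chain on which $y_1$ converges uniformly to $\overline{y_1}=\big((x-\tilde\lambda)^2\prod_{k\neq i,i+1}(x-\lambda_k)^{a_k}\big)^{1/3}$ (the factor $(x-\tilde\lambda)^2$ arising as the limit of $(x-\lambda_i(t))(x-\lambda_{i+1}(t))$ at fixed $x\neq\tilde\lambda$), giving a finite limit. This same uniform-convergence argument, applied to each $\eta_j$ and $\widetilde\eta_j$, yields part (3): over the component containing $q$, $y_1\to\overline{y_1}$ and $y_2\to\overline{y_2}$ with $\overline{y_2}=\big((x-\tilde\lambda)\prod(x-\lambda_k)^{b_k}\big)^{1/3}$ (since $b_i=b_{i+1}=2$ and $2+2\equiv1$ mod $3$), and the differentials $\widetilde{\eta_{j}}$ for $j\ge d_1+2$ match up with $\overline{\eta_{j-1}}$ after the reindexing forced by the fact that the limiting component has genus one less contribution from this vertex. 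The main obstacle is the bookkeeping of constants and branches in step (1): one must fix compatible branch cuts for $y_1$ near $x=\tilde\lambda$ and along the transported cycle $\rho^2(A_p)$, track the orientation of the encircling path and the root of unity $\omega$ it contributes, and verify that the exponent on $(\tilde\lambda-\lambda_k)$ is correctly $-b_k$ (not $-a_k$) after the branch normalization — a sign/root-of-unity error here propagates into $B^*$ and into Thomae's constant. The limits themselves are then straightforward dominated-convergence or residue estimates once the local model $x=\tilde\lambda+tu$ is in place.
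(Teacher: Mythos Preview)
Your overall strategy---the local substitution $x=\tilde\lambda+t\,u$ on the vanishing cycle, reduction to a Beta integral, and dominated convergence on the cycles $B_v$ with $v\neq p$---is exactly the paper's approach. The paper's proof consists of the same change of variables (written $x=\tilde\lambda+t(\xi-\tilde\lambda)$) and then the sentence ``We can similarly prove the rests.''

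However, your computation in part (1) is off, and the source is a typo in the statement you were given: the definition of $\widetilde{\eta_{j+d_1}}$ should have $y_2$ in the denominator, not $y_1$ (compare the original $\eta_{j+d_1}=x^{j-1}dx/y_2$, and the limit form $\overline{\eta_{j+d_1}}=(x-\tilde\lambda)^{j-1}dx/\overline{y_2}$ in part (3)). The paper's own proof indeed computes $\int_{B_p}dx/y_2(t)$. With $y_2$ and $b_i=b_{i+1}=2$, the local integrand has exponent $2/3$ at each of the two colliding points, so the Euler integral is genuinely $B(1/3,1/3)$; the prefactor in $t$ is $t^{-1/3}$ (since $y_2\sim t^{4/3}$ and $dx=t\,du$), whose cube is $t^{-1}$; and the remaining factors give $\prod_{k\neq i,i+1}(\tilde\lambda-\lambda_k)^{-b_k/3}$ directly.

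Working instead with $y_1$ as you did, the local exponent is $1/3$, so the Euler integral would be $B(2/3,2/3)$, not $B(1/3,1/3)$; your own prefactor $t^{1/3}$ cubes to $t$, not $t^{-1}$, so your asserted asymptotic $\sim t^{-1}$ is inconsistent with your displayed formula; and the exponents come out as $-a_k$. Your attempt to convert $-a_k$ to $-b_k$ by saying ``$\prod(\tilde\lambda-\lambda_k)^{-3}$ is absorbed into the choice of branch'' is not valid: that product is a fixed nonzero complex number depending on the $\lambda_k$, not a root of unity, and cannot be absorbed into a branch choice. Once you replace $y_1$ by $y_2$ in $\widetilde{\eta_{d_1+1}}$, all three discrepancies disappear and your argument goes through as written.
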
 
\begin{proof}
We prove (1). We compute the integral
$$
\int_{B_p}\frac{dx}{y_2(t)}=
\omega^i(1-\omega)\int_{\lambda_i(t)}^{\lambda_{i+1}(t)}
\frac{dx}{(x-\lambda_i(t))^{2/3}(x-\lambda_{i+1}(t))^{2/3}
\prod_{k\neq i,i+1}(x-\lambda_k)^{b_k/3}}
$$
by the variable change $x=\tilde\lambda+t(\xi-\tilde\lambda)$. 
Since 
$x-\lambda_i(t)=t(\xi-\lambda_i)$,
$\xi$ varies $\lambda_i<\xi<\lambda_{i+1}$.
Using
$x-\lambda_k=\tilde\lambda-\lambda_k+t(\xi-\tilde\lambda)$,
we have
\begin{align*}
\lim_{t\to 0}
t^{1/3}\int_{B_p}\frac{dx}{y_2(t)}=&
\frac{\omega^i(1-\omega)}
{\prod_{k\neq i,i+1}(\tilde\lambda-\lambda_k)^{b_k/3}}
\int_{\lambda_i}^{\lambda_{i+1}}
\frac{d\xi}{(\xi-\lambda_i)^{2/3}(\xi-\lambda_{i+1})^{2/3}} \\
=&
\frac{-\omega^{i'}(1-\omega)}{(\lambda_i-\lambda_{i+1})^{1/3}
\prod_{k\neq i,i+1}(\tilde\lambda-\lambda_k)^{b_k/3}}
B(\frac{1}{3},\frac{1}{3}).
\end{align*}
We can similarly prove the rests.
\end{proof}

\subsection{Deformation of period matrices}
%We use the same notation $(\Gamma, \C, M)$ for a marked
%binary tree.
We define the period matrices $P_A=P_A(\Sigma,\Gamma, \C, M)$ and
$P_B=P_B(\Sigma,\Gamma, \C,M)$ associated to the
configuration of points $\Sigma=\{\lambda_1, \dots,\lambda_m\}$
and a marked binary tree $(\Gamma, \C,M)$ by
\begin{align*}
&P_A(\Sigma, \Gamma, \C, M)
=\left(\begin{matrix}
\int_{A_v}\eta_1& \dots &\int_{A_v}\eta_{d_1+d_2-2}
\end{matrix}\right)_{v\in V(\Gamma)}, \\
&P_B(\Sigma, \Gamma, \C, M)
=\left(\begin{matrix}
\int_{B_v}\eta_1& \dots &\int_{B_v}\eta_{d_1+d_2-2}
\end{matrix}\right)_{v\in V(\Gamma)}.
\end{align*}
We consider a family of configuration 
$\Sigma(t)$
defined in the last subsection.
Then we have a family of period matrices 
$P_B(\Sigma(t), \Gamma, \C,M)$.
We set 
$$
\Sigma'=\{\lambda_1, \dots, \lambda_{i-1},\tilde\lambda,
\lambda_{i+2},\cdots, \lambda_m\}.
$$
Then we have the following 
proposition.
\begin{proposition}
We have
\begin{align*}
&\lim_{t\to 0}t(\lambda_i-\lambda_{i+1})
\det(P_B(\Sigma(t),\Gamma,\C,M))^3 \\
=&
B^*(\frac{1}{3},\frac{1}{3})^3
\prod_{k\neq i,i+1}(\tilde\lambda-\lambda_k)^{-b_k}\cdot
\det(P_B(\Sigma',\Gamma', \C',M'))^3,
\end{align*}
where the marked binary tree $(\Gamma', \C',M')$
is obtained by the decomposition of the marked binary tree of
$(\Gamma, \C, M)$ illustrated in 
Figure \ref{simple degeneration}.
\end{proposition}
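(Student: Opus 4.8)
The plan is to compute $\det(P_B(\Sigma(t),\Gamma,\C,M))$ by expanding along the row indexed by the vertex $p$, and to show that all the contribution of the singular behavior as $t\to 0$ is concentrated in that single row, in a single entry. The key structural fact, provided by the decomposition of the binary tree in Section 3, is that the symplectic basis $\{A_v,B_v\}_{v\in V(\Gamma)}$ splits into the symplectic basis of the component containing $q$ (namely $(\Gamma',\C',M')$, of genus $\#V(\Gamma)-1$) together with the single pair $\{A_p,B_p\}$ coming from the vanishing cycle. So in suitable coordinates the matrix $P_B(\Sigma(t),\Gamma,\C,M)$ is, up to reordering rows and columns and up to a change of basis of differentials that is independent of $t$, a matrix whose $p$-row behaves according to part (1) of the previous Proposition while all other rows behave according to parts (2) and (3).

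\textbf{Main steps.} First I would choose the basis of $H^0(C,\Omega^1)$ adapted to the degeneration: use $\eta_1,\dots,\eta_{d_1}$ and the forms $\widetilde{\eta_{d_1+1}},\dots,\widetilde{\eta_{d_1+d_2}}$ obtained by re-centering at $\tilde\lambda$ (these span the same space as the $\eta_j$, by an upper-triangular, $t$-independent change of basis whose determinant is a fixed nonzero constant, hence irrelevant after cubing and taking the limit — actually the transformation is unipotent so the determinant is $1$). Second, by part (1) of the Proposition, in the row $v=p$ the entries $\int_{B_p}\eta_j(t)\to 0$ for $j\le d_1$, the entries $\int_{B_p}\widetilde{\eta_j(t)}\to 0$ for $j\ge d_1+2$, and only $\int_{B_p}\widetilde{\eta_{d_1+1}(t)}$ blows up, with
$$
\lim_{t\to 0} t(\lambda_i-\lambda_{i+1})\Big(\int_{B_p}\widetilde{\eta_{d_1+1}(t)}\Big)^3
= B^*\!\big(\tfrac13,\tfrac13\big)^3\prod_{k\ne i,i+1}(\tilde\lambda-\lambda_k)^{-b_k}.
$$
Third, by parts (2) and (3), for every other vertex $v\ne p$ the entries converge: $\int_{B_v}\eta_j(t)\to\int_{B_v}\overline{\eta_j}$ for $j\le d_1$, $\int_{B_v}\widetilde{\eta_j(t)}\to\int_{B_v}\overline{\eta_{j-1}}$ for $j\ge d_1+2$, and $\int_{B_v}\widetilde{\eta_{d_1+1}(t)}$ stays finite. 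Fourth, expand $\det(P_B(\Sigma(t),\Gamma,\C,M))$ by cofactors along the column $d_1+1$ (the one carrying $\widetilde{\eta_{d_1+1}}$): every term except the one with the $(p,d_1+1)$ entry either involves a row-$p$ entry that tends to $0$ or, if it avoids row $p$ in that column, picks up a different row of column $d_1+1$ which stays bounded — so after multiplying by $t(\lambda_i-\lambda_{i+1})$ and letting $t\to0$ only the surviving term is
$$
\pm\,\Big(\int_{B_p}\widetilde{\eta_{d_1+1}(t)}\Big)\cdot
\det\!\big(\text{minor deleting row }p,\text{ column }d_1+1\big).
$$
Fifth, identify that minor: after deleting row $p$ and column $d_1+1$, the remaining rows are indexed by $V(\Gamma')=V(\Gamma)\setminus\{p\}$, the remaining columns by $\eta_1,\dots,\eta_{d_1}$ and $\overline{\eta_{d_1+1}},\dots$ in the limit, and by parts (2),(3) this minor converges exactly to $\det(P_B(\Sigma',\Gamma',\C',M'))$ computed with the differentials $\overline{\eta_j}$ from item (3) of the Proposition — which are precisely the natural basis of holomorphic forms on the curve with equation $\overline{y_1}^3=(x-\tilde\lambda)^2\prod_{k\ne i,i+1}(x-\lambda_k)^{a_k}$, i.e. the curve attached to $(\Gamma',\C',M')$ where $\tilde\lambda$ is now a black terminal. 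Cubing, taking the limit, and combining the three displayed limits gives the claimed identity. One must also check the sign/ordering bookkeeping is consistent (the cyclic order $\lambda_1,\dots,\lambda_m$ and the marking $M'$ on $\Gamma'$ are chosen so that this works), but since the statement is only up to the explicit constant and both sides are cubes this is routine.

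\textbf{The main obstacle.} The delicate point is the cofactor-expansion argument: one must verify that no \emph{other} monomial in the Leibniz expansion survives the multiplication by $t(\lambda_i-\lambda_{i+1})$. Terms that avoid row $p$ entirely are fine — they are $O(1)$ and get killed by the prefactor $t$. The danger is a term that uses row $p$ but pairs it with a column $j\ne d_1+1$: by part (1) that entry is $o(1)$, but we are multiplying by $t^{-1}$... no, by $t$, so these vanish too. The genuinely delicate case is pairing row $p$ with column $d_1+1$ but then being forced, in the complementary minor, to use column entries in rows $v\ne p$: there one needs that $\int_{B_v}\widetilde{\eta_{d_1+1}(t)}$ does \emph{not} appear, which is why the expansion is organized along column $d_1+1$ rather than row $p$ — each Leibniz monomial uses column $d_1+1$ exactly once, in exactly one row. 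So the real work is just confirming that the limit of the complementary minor is the full $\det(P_B(\Sigma',\Gamma',\C',M'))$ and not something degenerate, i.e. that the limiting matrix is nonsingular; this follows because $(\Gamma',\C',M')$ is an honest marked binary tree and $\overline{\eta_1},\dots$ is an honest basis of its holomorphic differentials, so its period matrix is invertible.
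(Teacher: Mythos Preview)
Your argument is correct and is precisely the approach the paper has in mind: the proposition is stated without proof because it follows from the preceding proposition by exactly the cofactor expansion you describe. The key observations---that the change to the re-centered basis $\widetilde{\eta_{d_1+j}}$ is unipotent (so the determinant is unchanged), that after this change the row $v=p$ has all entries tending to $0$ except the column $d_1+1$ entry which blows up like $t^{-1/3}$, and that the complementary minor converges to the period determinant of the degenerate component---are all correct, and your asymptotic bookkeeping (the cross terms in the cube being $O(t^{-1/3})$ so they die after multiplying by $t$) is accurate.

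One small remark: in identifying the limiting minor with $\det(P_B(\Sigma',\Gamma',\C',M'))$ you implicitly use a second unipotent change of basis on the limit side, since the $\overline{\eta_{d_1+j}}$ from part (3) are written in powers of $(x-\tilde\lambda)$ while the ``official'' basis for $(\Gamma',\C',M')$ uses powers of $x$; but as you note this is again determinant-preserving. The dimension count also works out exactly: replacing two white terminals by one black terminal keeps $d_1$ fixed and drops $d_2$ by one, so the minor has the right shape.
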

\subsection{Equi-distributed characteristics and theta constants}

Let $\lambda_1, \dots, \lambda_m$ be a configuration of points
in $\mathbb P^1$, $a_1/3, \dots, a_m/3$ be the branching index
of $\lambda_1, \dots, \lambda_m$ and
$(\Gamma,\C,M)$ be a marked binary tree as in the beginning of this section.
Let $\Sigma$ and $\overline{\Sigma}$
be sets given by
$$
\Sigma=\{\lambda_i\mid a_i=1\},\qquad
\overline{\Sigma}=\{\lambda_i\mid a_i=2\}.
$$
Let $\Lambda=\sum_i k_ie_i$
be an element in $\oplus_{i=1}^m\mathbb F_3e_i$. We set
$\Lambda_i,\overline{\Lambda_i}$ for $i=0,1,2$ by
$$
\Lambda_i=\{\lambda_j\in \Sigma \mid k_j=i\},\qquad
\overline{\Lambda_i}=\{\lambda_j\in \overline{\Sigma} \mid k_j=i\}.
$$
The element $\Lambda$
is in the kernel of
$\Pi:\oplus_{i=1}^m\mathbb F_3e_i\to \mathbb F_3$ defined in
(\ref{def of pi}) if and only if
\begin{equation}
\label{kernel pi}
\# \Lambda_1-\# \overline{\Lambda_1}+
2(\# \Lambda_2-\# \overline{\Lambda_2})\equiv 0 %\in
\quad (\bmod 3).
%\text{ mod }.
\end{equation}
An element $\Lambda$ is said to be equi-distributed if and only if
$$
\# \Lambda_0-\# \overline{\Lambda_0}=
\# \Lambda_1-\# \overline{\Lambda_1}=
\# \Lambda_2-\# \overline{\Lambda_2}.
$$
By (\ref{kernel pi}), an equi-distributed element $\Lambda$ gives an element in 
$H_1(C,\mathbb Q/\mathbb Z)_{(1-\rho)}$.
If the colors of all terminals are white, an element $\Lambda\in \bold V$
is equi-distributed if and only if it satisfies the condition 
(\ref{equi-dist all white}).

%We assign $k_i$ to each
%terminal $\lambda_i$ of $\Gamma$ for
%an element $\Lambda=\sum_i k_ie_i$.
We consider the decomposition of the marked binary tree 
as in Figure \ref{simple degeneration}.
%\S \ref{small deformation}.
For an element $\Lambda=\sum_j k_je_j\in H(\Gamma,\C)$, we define an element
$\Lambda'\in H(\Gamma',\C')$ by 
$\Lambda'=-(k_i+k_{i+1})e_{\tilde\lambda}+\sum_{j\neq i,i+1}k_je_j$.
The following lemma is easy to see.

\begin{lemma}
Let $\Lambda$ be an equi-distributed element in
$\oplus_i\mathbb F_3e_i$. Suppose that
$k_i\neq k_{i+1}$.
Then $\Lambda'$ is also equi-distributed. 
\end{lemma}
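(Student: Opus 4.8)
The plan is to reduce the claim to a direct verification using the defining formulas for $\Lambda'$ and the characterization of equi-distribution in terms of the counting functions $\#\Lambda_i$, $\#\overline{\Lambda_i}$. First I would unwind what happens to these six numbers under the passage from $\Lambda$ to $\Lambda'$. The operation replaces the two terminals $\lambda_i,\lambda_{i+1}$ (both white, since they are adjacent to the common vertex $p$ and the colours of $\lambda_i,\lambda_{i+1}$ coincide) by a single white terminal $\tilde\lambda$ carrying the coefficient $-(k_i+k_{i+1})\bmod 3$. So exactly the white counts $\#\Lambda_0,\#\Lambda_1,\#\Lambda_2$ are affected, while $\#\overline{\Lambda_0},\#\overline{\Lambda_1},\#\overline{\Lambda_2}$ are unchanged.

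Next I would use the hypothesis $k_i\neq k_{i+1}$. Since $k_i,k_{i+1}\in\mathbb F_3$ are distinct, the unordered pair $\{k_i,k_{i+1}\}$ is one of $\{0,1\},\{0,2\},\{1,2\}$, and in each case $-(k_i+k_{i+1})$ equals the third residue, i.e. the unique element of $\mathbb F_3\setminus\{k_i,k_{i+1}\}$. Write $c$ for this third residue. Then in forming $\Lambda'$ we remove one terminal from $\Lambda_{k_i}$, one from $\Lambda_{k_{i+1}}$, and add one terminal to $\Lambda_c$. Hence, writing $d_j=\#\Lambda_j-\#\overline{\Lambda_j}$ for the original differences and $d_j'$ for the new ones, we get $d_{k_i}'=d_{k_i}-1$, $d_{k_{i+1}}'=d_{k_{i+1}}-1$, $d_c'=d_c+1$. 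Since $\{k_i,k_{i+1},c\}=\{0,1,2\}$, this says precisely: subtract $1$ from two of the three differences and add $1$ to the remaining one.

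Now invoke the assumption that $\Lambda$ is equi-distributed, i.e. $d_0=d_1=d_2$. Subtracting $1$ from two of these equal numbers and adding $1$ to the third yields three new numbers that are again all equal (to $d_0-1$, $d_0-1$, $d_0+1$ — wait, that is not equal). Let me re-examine: the correct bookkeeping is that we add $1$ to $\Lambda_c$ and subtract $1$ from each of $\Lambda_{k_i},\Lambda_{k_{i+1}}$, so $d_c'=d_c+1$ and $d_{k_i}'=d_{k_i}-1$, $d_{k_{i+1}}'=d_{k_{i+1}}-1$; these are $d_0+1,d_0-1,d_0-1$, which are \emph{not} all equal. The resolution — and this is the one genuine point to get right — is that the element $\Lambda'$ of $H(\Gamma',\C')$ is only defined modulo $\operatorname{Im}(\operatorname{Diag})$, and equi-distribution is likewise a property of the class, stable under adding the all-ones vector. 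Adding $\operatorname{Diag}(1)=\sum_j e_j$ to a representative shifts every coefficient by $1$, hence cyclically permutes the roles $\Lambda_0\to\Lambda_2\to\Lambda_1\to\Lambda_0$ (or the reverse), which permutes the triple $(d_0,d_1,d_2)$ cyclically and leaves the equi-distribution condition invariant. So what I must actually check is that after the move the multiset $\{d_0',d_1',d_2'\}$ can be made constant by such a global shift — equivalently, by a cyclic relabeling — and the honest statement is that equi-distribution for $\Lambda'$ should be read relative to the new set of terminals, where $\tilde\lambda$ replaces $\lambda_i,\lambda_{i+1}$, so the total white count drops by one and the three new differences are each one less than before except that one of them gets a compensating $+1$... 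I would therefore carefully recompute $d_j'$ from the displayed definition of $\Lambda'$, confirm $d_c' = d_c - 1$ as well (the coefficient $-(k_i+k_{i+1})$ contributes to $\Lambda'_c$ but one should track that the new white set has two fewer old elements and one new element), and conclude $d_0'=d_1'=d_2' = d_0 - 1$. The main (and only) obstacle is precisely this sign/index bookkeeping with the residue $c=-(k_i+k_{i+1})$ and the $\operatorname{Diag}$-ambiguity; once the three differences are shown to drop uniformly by $1$, equi-distribution of $\Lambda'$ is immediate, and the lemma follows.
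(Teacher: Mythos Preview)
Your overall plan --- track how the six counts $\#\Lambda_j,\#\overline{\Lambda_j}$ change under $\Lambda\mapsto\Lambda'$ and verify that the three differences $d_j=\#\Lambda_j-\#\overline{\Lambda_j}$ remain equal --- is exactly the right one, and is what the paper has in mind when it calls the lemma easy. But your bookkeeping goes wrong at one concrete point: the colour of the new terminal $\tilde\lambda$.

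You assume $\tilde\lambda$ is white because $\lambda_i,\lambda_{i+1}$ are white. In fact $\tilde\lambda$ has the \emph{opposite} colour. The decomposition cuts the edge $E$ between the inner vertex $p$ (black, since it is adjacent to the white terminals $\lambda_i,\lambda_{i+1}$) and the vertex $q$; the tree $\Gamma'$ is the block containing $q$, with a new terminal $\tilde\lambda$ attached to $q$. Since $q$ is white, the binary-tree colouring forces $\tilde\lambda$ to be black. This is confirmed in the paper by the defining equation $\overline{y_1}^3=(x-\tilde\lambda)^2\prod_{k\neq i,i+1}(x-\lambda_k)^{a_k}$ (branching index $2/3$ at $\tilde\lambda$) and by Lemma~\ref{inductive difference product}, where one sees $\overline{\Lambda_0'}=\overline{\Lambda_0}\cup\{\tilde\lambda\}$.

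With the correct colour the computation is clean: writing $\{k_i,k_{i+1},c\}=\{0,1,2\}$ with $c=-(k_i+k_{i+1})$, removing the two white terminals gives $\#\Lambda'_{k_i}=\#\Lambda_{k_i}-1$ and $\#\Lambda'_{k_{i+1}}=\#\Lambda_{k_{i+1}}-1$, while adding the black terminal $\tilde\lambda$ with coefficient $c$ gives $\#\overline{\Lambda'_c}=\#\overline{\Lambda_c}+1$. Hence $d'_{k_i}=d_{k_i}-1$, $d'_{k_{i+1}}=d_{k_{i+1}}-1$, and $d'_c=d_c-1$: all three differences drop by $1$, and equi-distribution is preserved. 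Your attempt to rescue the situation via the $\operatorname{Diag}$-ambiguity cannot work, since that only permutes the triple $(d_0,d_1,d_2)$ cyclically and can never turn $(d_0+1,d_0-1,d_0-1)$ into a constant triple.
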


We consider the limit of the normalized period matrix.
The fiber of the stable curve at $\varepsilon=0$ becomes the union
of $C_1$ and $C_2$, where
\begin{align*}
&C_1:y^3=(x-\lambda_i)(x-\lambda_{i+1}), \\
&C_2:\eta^3=(\xi-\tilde\lambda)^2\prod_{j\neq i,i+1}(\xi-\lambda_j)^{a_i},
\end{align*}
 as in the last section.
Since the variation of Hodge structure is smooth, we can
compute the limit by considering logarithmic differentials.
We consider a continuous extension of the symplectic basis
$\{A_v,B_v\}_{v\in V(\Gamma)}$. 
Then $\{A_p,B_p\}$ is a symplectic basis of $C_1$ and
$\{A_v,B_v\}_{v\in V(\Gamma')}$ is that of $C_2$ in Figure \ref{simple degeneration}.
Let $\tau_i$ be the normalized period matrix of $C_i$ for $i=1,2$.
Then we have
$$
\lim_{\varepsilon\to 0}\tau=\left(\begin{matrix} 
\tau_1 & 0 \\ 0 & \tau_2 \end{matrix}\right).
$$
Under the above notations, we have the following proposition,
by the computation of Riemann constants in 
\S \ref{subsect:Riemann constants}.

\begin{proposition}
Let $\varrho$, $\varrho_1$ and $\varrho_2$ be the Riemann constants for
symplectic bases
$\{A_v,B_v\}_{v\in V(\Gamma)}$,
$\{A_p,B_p\}$ and
$\{A_v,B_v\}_{v\in V(\Gamma')}$. Then we have
$$
\lim_{\varepsilon\to 0}\vartheta(\tau(\varepsilon))
\left[\begin{matrix}\Lambda+\varrho
\end{matrix}
\right]^6=
\vartheta(\tau_1)\left[\begin{matrix}\frac{1-\varrho}{3}A_p+\varrho_1
\end{matrix}
\right]^6\cdot
\vartheta(\tau_2)\left[\begin{matrix}\Lambda'+\varrho_2
\end{matrix}
\right]^6.
$$
Here, we use the identification
(\ref{identification of homology}) and the remark just after it.
Moreover, we have 
\begin{align*}
\vartheta(\tau_1)
\left[\begin{matrix}\frac{1-\rho}{3}A_p+\varrho_1
\end{matrix}
\right]^6
&=\frac{3^{9/4}}{(2\pi)^6}\Gamma(1/3)^9\exp(\frac{-5\pi i}{12}).
\end{align*}
\end{proposition}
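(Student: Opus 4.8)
The plan is to reduce the identity to an explicit period computation on the genus-one curve $C_1: y^3 = (x-\lambda_i)(x-\lambda_{i+1})$ and then to evaluate the resulting theta constant against the classical formula for the CM elliptic curve with $j=0$. First I would observe that the characteristic $\tfrac{1-\rho}{3}A_p$ on $C_1$ is, by the combinatorial rule of Proposition on $\overline{A_v}$ (part (1)), the class $e_{i+1}-e_i$; in the $\mathbb F_3$-description of $H_1(C_1,\mathbb Q/\mathbb Z)_{(1-\rho)}$ it represents a nonzero $(1-\rho)$-torsion point. Since $C_1$ has genus one, this theta constant is an ordinary elliptic theta constant at the point $\tau_1$, and by part (2) of that Proposition the characteristic is $\tfrac13(A_p - B_p)$ (the vertex $p$ is white), i.e. the point $[\tfrac13,-\tfrac13]$ of $\operatorname{Jac}(C_1)[3]$ in the $\{A_p,B_p\}$ coordinates.

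Next I would pin down $\tau_1$ itself. The curve $C_1$ is $y^3=(x-\lambda_i)(x-\lambda_{i+1})$, which is isomorphic to the Fermat-type curve with $j$-invariant $0$, admitting complex multiplication by $\mathbb Z[\omega]$; concretely one checks that $\rho$ acts on $H^0(C_1,\Omega^1)$ by the scalar $\omega$ (white vertex), so $\tau_1$ lies in the $\rho$-fixed locus of $\mathfrak H_1$ and hence $\tau_1 = \omega$ (equivalently $\tau_1 = \tfrac{-1+\sqrt{-3}}{2}$, the CM point for $\mathbb Z[\omega]$). This is forced by the condition $\rho^*\tau_1 = \tau_1$ together with $(A_p,B_p)$ being the chosen symplectic basis on which $B_p=\rho^2 A_p$. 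With $\tau_1$ determined, $\vartheta(\tau_1)\bigl[\tfrac{1-\rho}{3}A_p+\varrho_1\bigr]$ is a completely explicit number: an elliptic theta constant with a $3$-torsion characteristic shifted by the Riemann constant $\varrho_1$, evaluated at $\tau_1=\omega$. By the subsection on Riemann constants, $\varrho_1 = \tfrac12(A_p+B_p)$, so the total characteristic is $\bigl[\tfrac12+\tfrac13,\ \tfrac12-\tfrac13\bigr] = \bigl[\tfrac56,\tfrac16\bigr]$, an even $6$-torsion characteristic, and the sixth power is well-defined.

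The remaining task is the explicit evaluation of $\vartheta(\omega)\bigl[\tfrac56,\tfrac16\bigr]^6$. Here I would invoke the variant of the Chowla–Selberg formula referred to in the introduction as (\ref{variant of chowla-selberg}): the special value of the relevant Jacobi theta constant at the CM point $\omega$ is expressible through $\Gamma(1/3)$. Using the standard relation between theta constants and the Weierstrass data, together with $\eta(\omega)$'s closed form $\eta(\omega)=e^{-\pi i/24}\,\tfrac{\Gamma(1/3)^{3/2}}{2\pi\cdot 3^{1/8}}$, one extracts a value of the shape $c\cdot 3^{a}\,\Gamma(1/3)^{b}$ for the sixth power; matching the powers ($\Gamma(1/3)^9$, the factor $3^{9/4}$, the $(2\pi)^{-6}$) and tracking the root-of-unity phase through the theta transformation formula yields the asserted $\exp(-5\pi i/12)$. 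I expect the main obstacle to be exactly this phase bookkeeping: the branch choices in $B^*(\tfrac13,\tfrac13)=(\omega-1)B(\tfrac13,\tfrac13)$, the sign conventions in the theta transformation formula used to move the characteristic into standard position, and the normalization of $\varrho_1$ all contribute roots of unity, and assembling them consistently to land on $e^{-5\pi i/12}$ rather than some other $24$th root of unity is the delicate step; the magnitude $\tfrac{3^{9/4}}{(2\pi)^6}\Gamma(1/3)^9$ follows comparatively mechanically from the Chowla–Selberg value.
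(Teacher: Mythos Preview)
Your approach is essentially the paper's: the proof in the paper consists of a single sentence citing the formula
\[
\vartheta(\omega)\!\left[\tfrac{1}{6},\,-\tfrac{1}{6}\right]^6
=\frac{3^{9/4}}{(2\pi)^6}\,\Gamma(1/3)^9\exp\!\Big(\frac{-5\pi i}{12}\Big)
\]
from \cite{MTY}, and your proposal is exactly the computation one would do to arrive at (or verify) that citation --- identify $\tau_1=\omega$, identify the characteristic as a specific $6$-torsion point, and evaluate via Chowla--Selberg. The first assertion of the proposition (the factorization of the limit) is not argued separately in the paper either; it is taken as an immediate consequence of the block-diagonal degeneration $\tau\to\operatorname{diag}(\tau_1,\tau_2)$ together with the description of $\varrho$ in \S\ref{subsect:Riemann constants}, so your silence on this point is consistent with the paper.

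One slip worth flagging: the inner vertex $p$ is \emph{black}, not white. In the standing hypothesis the terminals $\lambda_i,\lambda_{i+1}$ are white, and since edges of a binary tree join vertices of opposite color, the adjacent inner vertex $p$ must be black. By part~(2) of the Proposition on $\overline{A_v}$ this gives $\overline{A_p}\equiv\tfrac{1}{3}(-A_p+B_p)$, hence the characteristic is $[-\tfrac13,\tfrac13]$ and, after adding $\varrho_1=[\tfrac12,\tfrac12]$, one lands on $[\tfrac16,\tfrac56]\equiv[\tfrac16,-\tfrac16]$ --- which is the characteristic the paper actually quotes. Your $[\tfrac56,\tfrac16]$ is the mirror of this; the two agree for sixth powers because $\vartheta[\alpha,\beta](0)=\vartheta[-\alpha,-\beta](0)$ combined with integer periodicity of $\vartheta^6$, so the numerical conclusion is unaffected, but the sign bookkeeping you anticipate being ``the delicate step'' would go astray if you carried the wrong color of $p$ through the theta transformation formula.
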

\begin{proof}
As for the last statement, we use the formula
(c.f. \cite{MTY}):
\begin{equation}
\label{variant of chowla-selberg}
\vartheta(\omega)\left[\begin{matrix}\frac{1}{6}, 
\frac{-1}{6}\end{matrix}\right]^6 
=\frac{3^{9/4}}{(2\pi)^6}\Gamma(1/3)^9\exp(\frac{-5\pi i}{12}).
\end{equation}
\end{proof}
\section{Thomae's formula  and Thomae's constant for arbitrary branching indices}
Now we are ready to give the statement of Thomae's theorem
for arbitrary indices. By the inductive structure, we also evaluate
the constants appeared in Thomae's theorem.

Let $C\to \mathbb P^1$ be the cyclic triple covering branching at
a set $\Sigma=\{\lambda_1, \dots, \lambda_m\}$ of $\mathbb P^1$ and
$a_1/3,\dots, a_m/3$ be branching indices at $\lambda_1,
\cdots,\lambda_m$, respectively. Let $(\Gamma, \C, M)$ be a marked
binary tree whose set of terminals is $\Sigma$, 
$\{A_v,B_v\}$ be the associated symplectic basis, and
$\Lambda$ be an equi-distributed element in $H(\Gamma,\C)$.
We set $\Lambda_i,\overline{\Lambda_i}$ as in the last section.
\begin{definition}
\begin{enumerate}
\item
Let $A_1,A_2$ be $\Lambda_i$ or $\overline{\Lambda_i}$ ($i=0,1,2$) and
$A_1\neq A_2$.
We define $(A_1,A_2)$ by
\begin{align*}
&(A_1,A_2)=\prod_{\lambda\in A_1, \mu\in A_2}
(\lambda-\mu),
\end{align*}
and $(A_1,A_1)$ by
$$
(A_1,A_1)=\prod_{i<j,\lambda_i,\lambda_j\in A_1}(\lambda_i-\lambda_j).
$$
\item
The difference product $\Delta(\Sigma,\Lambda)$ of an equi-distributed
characteristic $\Lambda$ is defined by
$$
\Delta(\Sigma,\Lambda)=\prod_{i=0}^2(\Lambda_i\Lambda_i)^3
(\overline{\Lambda_i}\overline{\Lambda_i})^3  
\cdot
\prod_{0\leq i<j \leq 2}(\Lambda_i\Lambda_j)
(\overline{\Lambda_i}\overline{\Lambda_j})\cdot
\prod_{0\leq i\neq j \leq 2}(\Lambda_i\overline{\Lambda_j})^2.
$$
\end{enumerate}
\end{definition}
\begin{theorem}
\label{main theorem}
\begin{enumerate}
\item
Under the above notations, we have
\begin{align*}
\vartheta(\tau)\left[\begin{matrix}\Lambda+\varrho\end{matrix}
\right]^6= \pm&\kappa_{\Lambda}\cdot
\Delta(\Sigma,\Lambda)
\cdot
\det(P_B(\Sigma,\Gamma, \C,M))^3,
\end{align*}
where $\kappa_{\Lambda}$ is an absolute constant depending only on
$\Lambda$. Moreover $\kappa_{\Lambda}^6$ does not depend on the
choice of $\Lambda$.
\item
The constant $\kappa_{\Lambda}^{6}$ is equal to
$\kappa^{6(v_1+v_2)}$, where 
\begin{equation}
\label{building block}
\kappa=((2\pi)^33^{3/4}\exp(\frac{11\pi i}{12}))^{-1}
\end{equation}
and $v_1$ and $v_2$ are the numbers of
white and black inner vertices, respectively.
\end{enumerate}
\end{theorem}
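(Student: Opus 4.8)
The plan is to prove both parts of Theorem \ref{main theorem} simultaneously by induction on the number $v_1+v_2$ of inner vertices of the binary tree $(\Gamma,\C,M)$, equivalently on the genus $g=\#V(\Gamma)$. The base case is $g=1$: here $\Gamma$ has a single inner vertex $v$, the curve $C$ is an elliptic curve with complex multiplication by $\mathbb Z[\omega]$ (branched at three points), and Thomae's formula in this case is essentially Jacobi's formula recalled in the introduction. The identity $\vartheta(\tau)[\Lambda+\varrho]^6 = \pm\kappa^{\pm 6}\Delta(\Sigma,\Lambda)\det(P_B)^3$ with the explicit $\kappa$ of (\ref{building block}) should follow from the Chowla–Selberg variant (\ref{variant of chowla-selberg}) together with the explicit evaluation $\vartheta(\tau_1)[\frac{1-\rho}{3}A_p+\varrho_1]^6 = \frac{3^{9/4}}{(2\pi)^6}\Gamma(1/3)^9\exp(\frac{-5\pi i}{12})$ obtained in the last Proposition of \S\ref{section degeneration}; one checks by direct computation that this equals $\kappa^6$ (for a white vertex) up to the sign, and the black-vertex case follows by applying $\rho$. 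The case treated in \cite{BR}, \cite{N} (Theorem stated at the end of \S2, all terminals white) also serves as a consistency check.

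For the inductive step I would take a marked binary tree with at least two inner vertices, choose an edge $E$ incident to a white vertex $p$ and a black vertex $q$ with two neighbouring terminals $\lambda_i,\lambda_{i+1}$ of the same color attached to $p$ (one can always arrange such a configuration by the combinatorics of trees — a trivalent tree with $\geq 2$ inner vertices has a ``cherry''), and run the degeneration $\lambda_k(t)=\tilde\lambda+t(\lambda_k-\tilde\lambda)$, $k=i,i+1$, of \S\ref{small deformation}. The three key limit formulas are already available: the limit of the theta constant factors as
$$
\lim_{\varepsilon\to 0}\vartheta(\tau(\varepsilon))[\Lambda+\varrho]^6
=\vartheta(\tau_1)\Big[\tfrac{1-\rho}{3}A_p+\varrho_1\Big]^6\cdot
\vartheta(\tau_2)[\Lambda'+\varrho_2]^6,
$$
the limit of the determinant of the period matrix factors as in the Proposition of \S5.2, and $\Delta(\Sigma,\Lambda)$ degenerates in a controlled way: the terms involving $\lambda_i,\lambda_{i+1}$ either blow up like $(\lambda_i-\lambda_{i+1})^{-1}$ (a single factor, matching the $t(\lambda_i-\lambda_{i+1})$ on the period-determinant side) or specialize to the factors $(\tilde\lambda-\lambda_k)^{\pm b_k}$ of $\Delta(\Sigma',\Lambda')$. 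Dividing the theta identity by the product of the two limit identities and using $\vartheta(\tau_2)[\Lambda'+\varrho_2]^6=\pm\kappa^{6(v_1'+v_2')}\Delta(\Sigma',\Lambda')\det(P_B(\Sigma',\Gamma',\C',M'))^3$ from the induction hypothesis (applied to the smaller tree $(\Gamma',\C',M')$, which has one fewer inner vertex), one reads off $\vartheta(\tau)[\Lambda+\varrho]^6=\pm\kappa^{6(v_1+v_2)}\Delta(\Sigma,\Lambda)\det(P_B)^3$, since the extracted ``building block'' contributes exactly one more factor $\kappa^6$ (the new white vertex $p$ of $\Gamma'$ together with the black vertex being balanced, or more precisely the single Beta-function factor $B^*(1/3,1/3)^3$ reorganizes into $\kappa^6$ via (\ref{variant of chowla-selberg})). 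The $\Lambda$-independence of $\kappa_\Lambda^6$ is inherited from the $\Lambda$-independence asserted in the Theorem of \S2 at each stage, or can be seen directly since the final answer $\kappa^{6(v_1+v_2)}$ visibly depends only on the tree.

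I expect the main obstacle to be bookkeeping rather than any deep new input: one must verify carefully that the powers of $(\lambda_i-\lambda_{i+1})$, the powers of $(\tilde\lambda-\lambda_k)$ for $k\neq i,i+1$, and the various $\Gamma(1/3)$, $2\pi$, and root-of-unity factors on the theta side, the difference-product side, and the period-determinant side all cancel to leave precisely $\kappa^6$ per degeneration step — and in particular that the definition of $\Delta(\Sigma,\Lambda)$ with its exponents $3$, $1$, and $2$ on the three types of products $(\Lambda_i\Lambda_i)$, $(\Lambda_i\Lambda_j)$, $(\Lambda_i\overline{\Lambda_j})$ is exactly the combination that degenerates compatibly with $\Delta(\Sigma',\Lambda')$ under the substitution $\Lambda'=-(k_i+k_{i+1})e_{\tilde\lambda}+\sum_{j\neq i,i+1}k_je_j$. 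The condition $k_i\neq k_{i+1}$ in the Lemma of \S5.3 is what guarantees that exactly one factor $(\lambda_i-\lambda_{i+1})$ (not three, and not zero) appears with a pole, matching the single power of $t$; the case $k_i=k_{i+1}$ would require a different (and unnecessary) degeneration, so one should note that such a tree edge can always be avoided or handled by first permuting. A secondary point requiring care is the precise sign: the theorem only claims the identity up to $\pm$, so signs from orientations of cycles, from the difference products being defined only up to sign, and from the square/sixth-root ambiguities need only be tracked modulo this, which simplifies the computation considerably.
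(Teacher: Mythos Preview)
Your induction runs in the wrong direction, and this is a genuine gap. The limit formulas you cite---for $\vartheta^6$, for $\det(P_B)^3$, and for $\Delta(\Sigma,\Lambda)$---only tell you what happens as $\varepsilon\to 0$. From the induction hypothesis for the smaller tree $(\Gamma',\C',M')$ you can conclude that
\[
\lim_{\varepsilon\to 0}\ \frac{\vartheta(\tau(\varepsilon))[\Lambda+\varrho]^6}{\Delta(\Sigma(\varepsilon),\Lambda)\,\det(P_B(\Sigma(\varepsilon),\Gamma,\C,M))^3}
\]
exists and equals the expected constant, but this says nothing about whether the ratio is \emph{constant in the $\lambda_i$} for $\varepsilon\neq 0$. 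That constancy is exactly the content of part (1), so you cannot ``read it off'' from a single limit. The implication supplied by the degeneration goes the other way: Thomae for the larger $(\Gamma,\C,M)$ implies Thomae for the smaller $(\Gamma',\C',M')$ (this is the paper's Proposition~\ref{recursive proposition}(1)).

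The paper's proof reflects this asymmetry. The Bershadsky--Radul--Nakayashiki theorem of \S2 is not a consistency check but the essential input: it furnishes part (1) outright when all terminals are white. Degeneration is then used \emph{downward}---first to compute $\kappa_\Lambda^6$ in the all-white case by iterating Proposition~\ref{recursive proposition}(2) until the tree is exhausted, and second to obtain the general case by \emph{embedding} the given mixed-color tree $\Gamma$ into a larger all-white tree $\overline\Gamma$ (each black terminal being the image of two collapsed white terminals) and degenerating $\overline\Gamma$ back down to $\Gamma$. Your bookkeeping of the $(\lambda_i-\lambda_{i+1})$, $(\tilde\lambda-\lambda_k)^{b_k}$, and Beta/Gamma factors is exactly the content of Lemma~\ref{inductive difference product} and the computation inside Proposition~\ref{recursive proposition}, and is correct; what is missing is an independent proof of part (1) that would let you dispense with \cite{BR}, \cite{N}.
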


Let us prove the above theorem. 
We consider the degeneration studied in Section \ref{section degeneration}.
We use the notations $\Gamma',\C',\Sigma'$ and $\Lambda'$
as in the last section.
Let $\Lambda=\sum_i k_i e_i$ be an equi-distributed element
in $H(\Gamma, \C)$.
We prove the case:
$k_i=2,k_{i+1}=1$ and $a_i=a_{i+1}=1$.
The other cases can be similarly proved.
We prepare Lemma 
\ref{inductive difference product} and Proposition
\ref{recursive proposition}.
%We prepair the following lemma.
\begin{lemma}
\label{inductive difference product}
We have
$$
\frac{1}
{\prod_{i}(\tilde\lambda-\lambda_i)^{b_i}}
\cdot
\lim_{\varepsilon\to 0}
\Big[
\frac{\Delta(\Sigma(\epsilon),\Lambda)}{t(\lambda_i-\lambda_{i+1})}
\Big]
=\Delta(\Sigma',\Lambda').
$$
\end{lemma}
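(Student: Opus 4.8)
The plan is to follow each difference factor $(\lambda_a(t)-\lambda_b(t))$ appearing in $\Delta(\Sigma(t),\Lambda)$ through the limit $t\to 0$ (where $t=\varepsilon^3$, so $\varepsilon\to 0$ is the same as $t\to 0$). Since $\lambda_j(t)=\lambda_j$ for $j\neq i,i+1$ while $\lambda_k(t)=\tilde\lambda+t(\lambda_k-\tilde\lambda)$ for $k=i,i+1$, the factors fall into three types: (a) the single factor from the pair $\{\lambda_i,\lambda_{i+1}\}$, which equals $\pm\, t(\lambda_i-\lambda_{i+1})$ and is the only factor of $\Delta(\Sigma(t),\Lambda)$ tending to $0$; (b) factors $(\lambda_k(t)-\lambda_l)$ with exactly one index in $\{i,i+1\}$, each tending to $\tilde\lambda-\lambda_l$; and (c) factors $(\lambda_j-\lambda_k)$ with $j,k\notin\{i,i+1\}$, which are independent of $t$.

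First I would check that in the case $k_i=2$, $k_{i+1}=1$, $a_i=a_{i+1}=1$ the type (a) factor enters $\Delta$ exactly once and to the first power: here $\lambda_i\in\Lambda_2$ and $\lambda_{i+1}\in\Lambda_1$ are both white, so the pair occurs only in $(\Lambda_1\Lambda_2)$, which appears in $\Delta$ to the power one. Hence dividing by $t(\lambda_i-\lambda_{i+1})$ removes precisely this vanishing factor, up to sign. Next, the residues $k_j$ for $j\neq i,i+1$ and the colours of those terminals are unchanged in passing to $(\Gamma',\C')$, so $\Lambda'_s\setminus\{\tilde\lambda\}$ and $\overline{\Lambda'_s}\setminus\{\tilde\lambda\}$ coincide with $\Lambda_s\setminus\{\lambda_i,\lambda_{i+1}\}$ and $\overline{\Lambda_s}\setminus\{\lambda_i,\lambda_{i+1}\}$; therefore the type (c) factors occur with the same exponents in $\Delta(\Sigma(t),\Lambda)$ and in $\Delta(\Sigma',\Lambda')$ and cancel in the quotient.

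The heart of the matter is the type (b) factors. For $k\neq i,i+1$ let $N_k$ be the combined exponent of $(\lambda_i(t)-\lambda_k)$ and $(\lambda_{i+1}(t)-\lambda_k)$ in $\Delta(\Sigma(t),\Lambda)$, and let $N'_k$ be the exponent of $(\tilde\lambda-\lambda_k)$ in $\Delta(\Sigma',\Lambda')$; note that in $\Gamma'$ the merged terminal $\tilde\lambda$ is black with residue $k_i+k_{i+1}\equiv 0$, i.e. $\tilde\lambda\in\overline{\Lambda'_0}$. I would then run a short case analysis over whether $\lambda_k$ is white ($a_k=1$) or black ($a_k=2$) and over $k_k\in\{0,1,2\}$, reading off the six values of $N_k$ and the six values of $N'_k$ directly from the definition of $\Delta$ — using in particular that a white--black pair $(\Lambda_s\overline{\Lambda_{s'}})$ contributes to $\Delta$ (with exponent $2$) only when $s\neq s'$. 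In every one of the six cases one finds $N_k-N'_k=b_k=3-a_k$. Passing to the limit then yields
$$
\lim_{t\to 0}\frac{\Delta(\Sigma(t),\Lambda)}{t(\lambda_i-\lambda_{i+1})}
=\pm\Big(\prod_{k\neq i,i+1}(\tilde\lambda-\lambda_k)^{b_k}\Big)\cdot\Delta(\Sigma',\Lambda'),
$$
which is the claim (the equality holding up to sign, as for all difference products here). The remaining configurations — the other choices of $(k_i,k_{i+1})$ with $k_i\neq k_{i+1}$, and the case $a_i=a_{i+1}=2$ — are treated by the same bookkeeping, with $\tilde\lambda$ now carrying residue $-(k_i+k_{i+1})$ and, in the second case, colour white.

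I expect the main obstacle to be exactly this exponent bookkeeping: one must keep careful track of the three kinds of factor in $\Delta$ — the self-pairings $(\Lambda_s\Lambda_s)$, $(\overline{\Lambda_s}\overline{\Lambda_s})$ with exponent $3$, the mixed same-colour pairs $(\Lambda_s\Lambda_{s'})$, $(\overline{\Lambda_s}\overline{\Lambda_{s'}})$ with exponent $1$, and the white--black pairs $(\Lambda_s\overline{\Lambda_{s'}})$ with exponent $2$ occurring only for $s\neq s'$ — and of the fact that when $k_i+k_{i+1}\equiv 0$ the factor $(\tilde\lambda-\lambda_k)$ sits in a different product of $\Delta(\Sigma',\Lambda')$ from the one containing $(\lambda_i-\lambda_k)$ and $(\lambda_{i+1}-\lambda_k)$ in $\Delta(\Sigma,\Lambda)$, which is precisely what makes the bookkeeping reproduce the power $b_k$.
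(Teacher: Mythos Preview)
Your proposal is correct and follows essentially the same approach as the paper: both arguments track each difference factor $(\lambda_a(t)-\lambda_b(t))$ through the limit $t\to 0$, isolate the single vanishing factor coming from $(\Lambda_1\Lambda_2)$, and match the surplus $(\tilde\lambda-\lambda_k)$ contributions against $\prod_{k}(\tilde\lambda-\lambda_k)^{b_k}$. The only difference is organizational --- the paper groups the bookkeeping by the products $(\Lambda_s(t)\Lambda_{s'}(t))$, $(\overline{\Lambda_s}(t)\overline{\Lambda_{s'}}(t))$, $(\Lambda_s(t)\overline{\Lambda_{s'}}(t))$ appearing in $\Delta$ rather than by the individual terminal $\lambda_k$ --- but the content is identical.
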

\begin{proof}
We consider a family of the difference product $(\Lambda_i(t),\Lambda_j(t))$
and so on. Then we have 
$$
\Lambda_2'=\Lambda_2-\{\lambda_i\},\quad
\Lambda_1'=\Lambda_1-\{\lambda_{i+1}\},\quad
\overline{\Lambda_0'}=\overline{\Lambda_0}\cup \{\tilde\lambda\}.
$$
Therefore, we have
\begin{align*}
&\lim_{t\to 0}(\Lambda_i(t)\Lambda_i(t))^3=
(\Lambda_i'\Lambda_i')^3(\Lambda_i',\tilde\lambda)^3 \quad i=1,2, \\
&\lim_{t\to 0}(\Lambda_0(t)\Lambda_i(t))=
(\Lambda_0\Lambda_i')(\Lambda_0,\tilde\lambda) \quad i=1,2, \\
&\lim_{t\to 0}\frac{1}{t}(\Lambda_1(t)\Lambda_2(t))=
(\Lambda_1'\Lambda_2')(\Lambda_1',\tilde\lambda)
(\Lambda_2',\tilde\lambda)(\lambda_{i}-\lambda_{i+1}),  \\
&\lim_{t\to 0}(\Lambda_i(t)\overline{\Lambda_0}(t))^2=
(\Lambda_i'\overline{\Lambda_0}')^2(\Lambda_i',\tilde\lambda)^{-2} 
(\tilde\lambda\overline{\Lambda_0})^2
\quad i=1,2, \\
&\lim_{t\to 0}(\Lambda_i(t)\overline{\Lambda_j}(t))^2=
(\Lambda_i'\overline{\Lambda_j})^2 
(\tilde\lambda\overline{\Lambda_j})^2
\quad (i,j)=(1,2) \text{ or } (2,1), \\
&\lim_{t\to 0}(\overline{\Lambda_0}(t)\overline{\Lambda_0}(t))^3=
(\overline{\Lambda_0}'\overline{\Lambda_0}')^3
(\tilde\lambda\overline{\Lambda_0})^{-3}, \\
&\lim_{t\to 0}(\overline{\Lambda_0}(t)\overline{\Lambda_i}(t))=
(\overline{\Lambda_0}'\overline{\Lambda_i})
(\tilde\lambda\overline{\Lambda_i})^{-1} \quad i=1,2.
\end{align*}
Since
$$
\prod_{k\neq i,i+1}(\tilde\lambda-\lambda_i)^{b_i}
=
(\Lambda_1',\tilde\lambda)^2(\Lambda_2',\tilde\lambda)^2
(\Lambda_0,\tilde\lambda)^2(\overline{\Lambda_0},\tilde\lambda)
(\overline{\Lambda_1},\tilde\lambda)(\overline{\Lambda_2},\tilde\lambda),
$$
we have the lemma.
\end{proof}
\begin{proposition}
\label{recursive proposition}
\begin{enumerate}
\item
The statement (1) of Theorem \ref{main theorem}
for $(\Gamma,\C, M)$ implies the same statement 
for $(\Gamma',\C',M')$.
\item
Suppose that the statement (1) of Theorem \ref{main theorem}
holds for $\Gamma$ and $\Gamma'$. Then we have the recursive relation
$$
\kappa_{\Lambda}=\pm \kappa_{\Lambda'}\kappa,
$$
where $\kappa$ is defined in (\ref{building block})
\end{enumerate}
\end{proposition}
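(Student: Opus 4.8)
The plan is to substitute the one-parameter family $\Sigma(t)$ of configurations into Thomae's identity for $(\Gamma,\C,M)$ and pass to the limit $\varepsilon\to0$, using the three degeneration statements of \S\ref{section degeneration} to control the theta constant, the difference product and $\det P_B$ one factor at a time. As already indicated, I would only carry out the case $k_i=2$, $k_{i+1}=1$, $a_i=a_{i+1}=1$; the remaining cases go through in the same way after interchanging the colors, and taken together they realize every equi-distributed class of $H(\Gamma',\C')$ as a $\Lambda'$.

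For part (1), I start from the assumed identity
\[
\vartheta(\tau(t))\left[\Lambda+\varrho\right]^6
=\pm\,\kappa_{\Lambda}\cdot\Delta(\Sigma(t),\Lambda)\cdot\det\big(P_B(\Sigma(t),\Gamma,\C,M)\big)^3
\]
and rewrite the right-hand side as
\[
\pm\,\kappa_{\Lambda}\cdot\frac{\Delta(\Sigma(t),\Lambda)}{t(\lambda_i-\lambda_{i+1})}\cdot
t(\lambda_i-\lambda_{i+1})\,\det\big(P_B(\Sigma(t),\Gamma,\C,M)\big)^3 .
\]
Letting $\varepsilon\to0$, the left-hand side tends to $\vartheta(\tau_1)\left[\frac{1-\rho}{3}A_p+\varrho_1\right]^6\cdot\vartheta(\tau_2)\left[\Lambda'+\varrho_2\right]^6$ by the proposition of \S\ref{section degeneration} on the limit of the theta constant, the first factor on the right tends to $\big(\prod_{k\neq i,i+1}(\tilde\lambda-\lambda_k)^{b_k}\big)\Delta(\Sigma',\Lambda')$ by Lemma~\ref{inductive difference product}, and the second factor tends to $B^*(\frac{1}{3},\frac{1}{3})^3\big(\prod_{k\neq i,i+1}(\tilde\lambda-\lambda_k)^{-b_k}\big)\det\big(P_B(\Sigma',\Gamma',\C',M')\big)^3$ by the proposition of \S\ref{section degeneration} on the limit of $t(\lambda_i-\lambda_{i+1})\det(P_B)^3$. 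The two branch-point products cancel exactly, so dividing by the nonzero constant $\vartheta(\tau_1)\left[\frac{1-\rho}{3}A_p+\varrho_1\right]^6=\frac{3^{9/4}}{(2\pi)^6}\Gamma(1/3)^9\exp(\frac{-5\pi i}{12})$ leaves precisely statement (1) of Theorem~\ref{main theorem} for $(\Gamma',\C',M')$, with
\[
\kappa_{\Lambda'}=\pm\,\kappa_{\Lambda}\,B^*(\tfrac{1}{3},\tfrac{1}{3})^3
\Big(\tfrac{3^{9/4}}{(2\pi)^6}\Gamma(1/3)^9\exp(\tfrac{-5\pi i}{12})\Big)^{-1}.
\]
Since $\tilde\lambda$ and the remaining $\lambda_k$ are free, $\Sigma'$ exhausts all $(m-1)$-point configurations, so the identity is general; since all the auxiliary factors are absolute constants, $\kappa_{\Lambda'}$ depends only on $\Lambda'$, and the $\Lambda'$-independence of $\kappa_{\Lambda'}^6$ is inherited from the $\Lambda$-independence of $\kappa_{\Lambda}^6$.

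For part (2), once Thomae's identity holds for both $\Gamma$ and $\Gamma'$, the displayed value of $\kappa_{\Lambda'}$ is forced by comparing it with the computed limit, hence
\[
\kappa_{\Lambda}=\pm\,\kappa_{\Lambda'}
\Big(\tfrac{3^{9/4}}{(2\pi)^6}\Gamma(1/3)^9\exp(\tfrac{-5\pi i}{12})\Big)B^*(\tfrac{1}{3},\tfrac{1}{3})^{-3},
\]
and it only remains to recognize the scalar as $\pm\kappa$. I would use $B(\frac{1}{3},\frac{1}{3})=\Gamma(1/3)^2/\Gamma(2/3)=\sqrt3\,\Gamma(1/3)^3/(2\pi)$, which follows from $\Gamma(1/3)\Gamma(2/3)=2\pi/\sqrt3$, together with $\omega-1=\sqrt3\exp(\frac{5\pi i}{6})$, hence $(\omega-1)^3=3^{3/2}\,i$, to obtain $B^*(\frac{1}{3},\frac{1}{3})^3=(\omega-1)^3B(\frac{1}{3},\frac{1}{3})^3=27\,i\,\Gamma(1/3)^9/(8\pi^3)$. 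Substituting, the powers of $\Gamma(1/3)$ cancel, and collecting the powers of $3$, the powers of $2\pi$ and the twelfth roots of unity (using $-i=\exp(-\frac{\pi i}{2})$, $3^{9/4}/3^3=3^{-3/4}$ and $8\pi^3=(2\pi)^3$) collapses the scalar to $3^{-3/4}\exp(-\frac{11\pi i}{12})/(2\pi)^3=\kappa$ as in (\ref{building block}), which gives the recursion $\kappa_{\Lambda}=\pm\,\kappa_{\Lambda'}\kappa$.

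I expect the only genuine work to be two pieces of bookkeeping: checking that the exponents $b_k$ coming out of Lemma~\ref{inductive difference product} and out of the period-matrix degeneration match exactly, so that the branch-point products really cancel (otherwise $\kappa_{\Lambda'}$ would depend spuriously on the configuration, contradicting its being an absolute constant), and keeping the $3$-power exponents and the arguments of the exponentials straight in the closed-form evaluation of the scalar in part (2). Both are routine but error-prone; everything else is a formal consequence of the degeneration statements of \S\ref{section degeneration}.
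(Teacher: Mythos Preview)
Your proposal is correct and follows essentially the same route as the paper: both pass Thomae's identity for $(\Gamma,\C,M)$ through the limit $\varepsilon\to 0$, factor the right-hand side as $\frac{\Delta(\Sigma(t),\Lambda)}{t(\lambda_i-\lambda_{i+1})}\cdot t(\lambda_i-\lambda_{i+1})\det(P_B)^3$, invoke Lemma~\ref{inductive difference product} and the period-matrix degeneration proposition so that the branch-point products $\prod_{k\neq i,i+1}(\tilde\lambda-\lambda_k)^{\pm b_k}$ cancel, and then divide by the explicit theta value $\vartheta(\tau_1)[\frac{1-\rho}{3}A_p+\varrho_1]^6$ from (\ref{variant of chowla-selberg}) to isolate $\kappa_{\Lambda'}$. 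Your expansion of the scalar via $B(\tfrac13,\tfrac13)=\sqrt{3}\,\Gamma(1/3)^3/(2\pi)$ and $(\omega-1)^3=3^{3/2}i$ is a slightly more detailed version of the paper's two-line simplification, and it lands on the same value $\kappa^{-1}=(2\pi)^33^{3/4}\exp(\tfrac{11\pi i}{12})$.
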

\begin{proof}(1)
We assume that the statement holds for the set of
terminals $\Sigma(\epsilon)$ of $(\Gamma, \C,M)$
with $\epsilon \neq 0$.
We consider the following limit:
\begin{align}
\label{inductive and limit}
&\vartheta(\tau_1)[\Lambda_1+\varrho_1]^6
\vartheta(\tau')[\Lambda'+\varrho']^6  \\
\nonumber
= &
\lim_{\epsilon\to 0}\vartheta(\tau(\epsilon))[\Lambda+\varrho]^6 \\
\nonumber
 =&\pm \kappa_{\Lambda}\lim_{\varepsilon\to 0}
\Big[\Delta(\Sigma(\epsilon),\Lambda)
\cdot
\det (P_B(\Sigma(\epsilon),\Gamma,\C, M))^3 \Big]\\
\nonumber
 =&\pm \kappa_{\Lambda}
\lim_{\varepsilon\to 0}
\Big[
t(\lambda_i-\lambda_{i+1})\det (P_B(\Sigma(\epsilon),\Gamma,\C, M))^3
\Big]
\cdot
\lim_{\varepsilon\to 0}
\Big[
\frac{\Delta(\Sigma(\epsilon),\Lambda)}{t(\lambda_i-\lambda_{i+1})}
\Big]\\
\nonumber
=&\pm
\frac{B^*(\frac{1}{3},\frac{1}{3})^3\kappa_{\Lambda}}
{\prod_{i}(\tilde\lambda-\lambda_i)^{b_i}}
\cdot
\lim_{\varepsilon\to 0}
\Big[
\frac{\Delta(\Sigma(\epsilon),\Lambda)}{t(\lambda_i-\lambda_{i+1})}
\Big]
 \cdot\det (P_B(\Sigma',\Gamma',\C', M'))^3.
\end{align}
By Lemma \ref{inductive difference product}
and the equality (\ref{inductive and limit}), we have
$$
\vartheta(\tau')[\Lambda']^6
=\pm \kappa_{\Lambda'}
\Delta(\Sigma',\Lambda')
\cdot\det (P_B(\Sigma',\Gamma',\C', M'))^3,
$$
where
\begin{align*}
\kappa_{\Lambda'}&=
\pm \vartheta(\tau_1)
[\Lambda_1]^{-6}
B^*(\frac{1}{3},\frac{1}{3})^3\kappa_{\Lambda} \\
&=\pm\frac{(2\pi)^6}{3^{9/4}}\exp(\frac{5\pi i}{12})(\omega-1)^3
\frac{3^{3/2}}{(2\pi)^3}\kappa_{\Lambda} \\
&=\pm
(2\pi)^33^{3/4}\exp(\frac{11\pi i}{12})\kappa_{\Lambda}.
\end{align*}
\end{proof}
\begin{proof}[Proof of Theorem\ref{main theorem}]
We consider the case there exists a procedure of degenerations
\begin{equation}
\label{procedure}
\Lambda \to \Lambda_1+\Lambda^{(1)} \to
\Lambda_1+\Lambda_2+\Lambda^{(2)}\to \cdots \to
\Lambda_1+\cdots +\Lambda_g,
\end{equation}
with $\kappa_{\Lambda_i}\neq 0$.
In this case, by applying Proposition \ref{recursive proposition},
we have the theorem.

If all terminals are white, 
since we know that $\kappa_{\Lambda}^6$ is independent of the
choice of $\Lambda$, we have the theorem once we
know the existence of $\Lambda$ and the procedure
of degenerations (\ref{procedure}).
Actually we can choose such
$\Lambda$ and procedure (\ref{procedure}).

Now we prove the general case. For any marked binary graph and
any equi-distributed element $\Lambda$,
we can choose procedure
$$
\overline{\Lambda}\to \Lambda_1+\Lambda^{(1)}\to
\Lambda_1+\Lambda_2+\Lambda^{(2)}\to \cdots 
\Lambda_1+\cdots +\Lambda_k+\Lambda,
$$
where $\kappa_{\Lambda_i}\neq 0$
and $\overline{\Lambda}$ is a equi-distributed element 
for a marked binary graph
whose terminals are white. Since we know the theorem
on $\overline{\Lambda}$, we have the theorem for
$\Lambda$.
\end{proof}

\section{An example}
We consider the following marked binary tree in
Figure \ref{pict example} and 
an equi-distributed element
$-e_1+e_2+e_3-e_4 \in H(\Gamma,\C)$.
We assume that $\lambda_1<\lambda_2<\lambda_3<\lambda_4 \in \mathbb R$
to fix a branch of the third roots.

\begin{figure}[hbt]
\hskip 0.0in\includegraphics[scale=0.4]{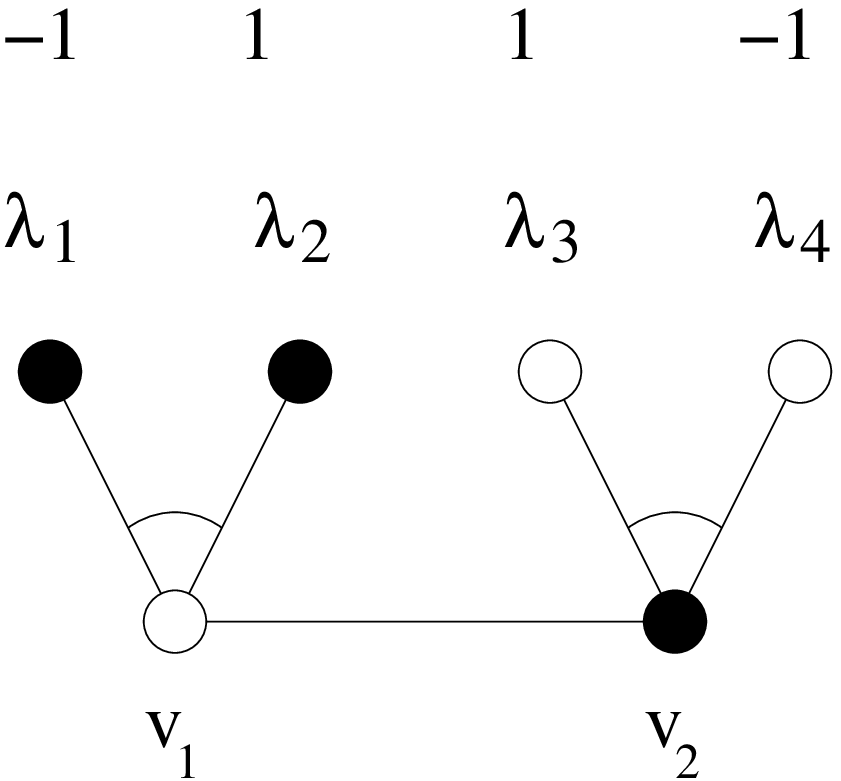}
\caption{Example}
\label{pict example}
\end{figure}

Then the branches of $y_1, y_2$ are given in Figure \ref{values on branches}.
\begin{figure}
\vskip 0.2in
\hskip 0in\includegraphics[scale=0.3]{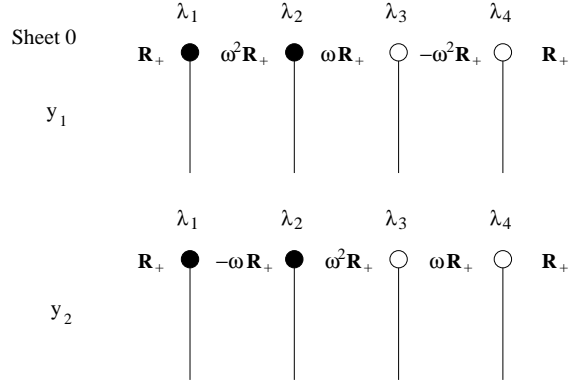}
\caption{Values on branches}
\label{values on branches}
\end{figure}
Thus we have $A=(a_{ij}), B=(b_{ij})$ with
\begin{align*}
&a_{11}=\int_{A_1}\frac{dx}{y_1}=(\omega^2-1)
\int_{\lambda_1}^{\lambda_2}\frac{dx}{\ ^3
\sqrt{(x-\lambda_1)^2(\lambda_2-x)^2(\lambda_3-x)(\lambda_4-x)}},
\\
&a_{21}=\int_{A_2}\frac{dx}{y_1}=(\omega^2-1)  
\int_{\lambda_3}^{\lambda_4}\frac{dx}{\ ^3
\sqrt{(x-\lambda_1)^2(x-\lambda_2)^2(x-\lambda_3)(\lambda_4-x)}},
\\
&a_{12}=\int_{A_2}\frac{dx}{y_2}=(1-\omega)
\int_{\lambda_3}^{\lambda_4}\frac{dx}{\ ^3
\sqrt{(x-\lambda_1)(x-\lambda_2)(x-\lambda_3)^2(\lambda_4-x)^2}},
\\
&a_{22}=\int_{A_1}\frac{dx}{y_2}= (1-\omega)
\int_{\lambda_1}^{\lambda_2}\frac{dx}{\ ^3
\sqrt{(x-\lambda_1)(\lambda_2-x)(\lambda_3-x)^2(\lambda_4-x)^2}},
\end{align*}

\begin{align*}
&b_{11}=\int_{B_1}\frac{dx}{y_1}=\omega^2\int_{A_1}
\frac{dx}{y_1},\quad
b_{21}=\int_{B_2}\frac{dx}{y_1}=\omega\int_{A_2}\frac{dx}{y_1},
\\
&b_{12}=\int_{B_1}\frac{dx}{y_2}=
\omega\int_{A_1}\frac{dx}{y_2},\quad
b_{22}=\int_{B_2}\frac{dx}{y_2}=\omega^2
\int_{A_2}\frac{dx}{y_2}.
\end{align*}
The above equi-distributed element is 
$\overline{A_1}-\overline{A_2}=\frac{1}{3}(A_1+A_2-B_1-B_2)$
and we have
\begin{align*}
\vartheta(\tau)[5/6,5/6,1/6,1/6]^6
=&\frac{1}{3\sqrt{3}(2\pi)^6}\det(B)^3 \exp(\frac{\pi i}{6})
\\
&(\lambda_2-\lambda_1)(\lambda_4-\lambda_3)
(\lambda_3-\lambda_1)^2(\lambda_4-\lambda_2)^2.
\end{align*}

\end{document}